\let\pa\partial
\let\na\nabla
\let\eps\varepsilon
\newcommand{\N}{{\mathbb N}}
\newcommand{\R}{{\mathbb R}}
\newcommand{\diver}{\operatorname{div}}
\newcommand{\T}{{\mathcal T}}
\newcommand{\E}{{\mathcal E}}
\newcommand{\m}{\operatorname{m}}
\newcommand{\Eint}{{\mathcal E}_{{\rm int},K}}
\newcommand{\dist}{{\operatorname{d}}}
\newtheorem{theorem}{Theorem}
\newtheorem{lemma}[theorem]{Lemma}
\newtheorem{proposition}[theorem]{Proposition}
\newtheorem{remark}[theorem]{Remark}
\begin{document}

\title[Discrete boundedness-by-entropy method]{A discrete boundedness-by-entropy
method \\ for finite-volume approximations \\ of cross-diffusion systems}

\author[A. J\"ungel]{Ansgar J\"ungel}
\address{Institute for Analysis and Scientific Computing, Vienna University of
	Technology, Wiedner Hauptstra\ss e 8--10, 1040 Wien, Austria}
\email{juengel@tuwien.ac.at}

\author[A. Zurek]{Antoine Zurek}
\address{Institute for Analysis and Scientific Computing, Vienna University of
	Technology, Wiedner Hauptstra\ss e 8--10, 1040 Wien, Austria}
\email{antoine.zurek@tuwien.ac.at}

\date{\today}

\thanks{The authors have been partially supported by the Austrian Science Fund (FWF), 
grants P30000, P33010, F65, and W1245, and by the multilateral project of the
Austrian Agency for International Cooperation in Education and Research
(OeAD), grant MULT 11/2020. This work received funding from the European 
Research Council (ERC) under the European Union's Horizon 2020 research and 
innovation programme, ERC Advanced Grant NEUROMORPH, no.~101018153.}

\begin{abstract}
An implicit Euler finite-volume scheme for general cross-diffusion systems 
with volume-filling constraints is proposed and analyzed. 
The diffusion matrix may be nonsymmetric and
not positive semidefinite, but the diffusion system is assumed to possess a
formal gradient-flow structure that yields $L^\infty$ bounds on the continuous
level. Examples include the Maxwell--Stefan systems for gas mixtures, tumor-growth
models, and systems for the fabrication of thin-film solar cells.
The proposed numerical scheme preserves the structure of the continuous
equations, namely the entropy dissipation inequality as well as the 
nonnegativity of the concentrations and the volume-filling constraints. 
The discrete entropy
structure is a consequence of a new vector-valued discrete chain rule.
The existence of discrete solutions, their positivity, and the convergence of
the scheme is proved. The numerical scheme is implemented for a 
one-dimensional Maxwell--Stefan model and a two-dimensional thin-film 
solar cell system. It is illustrated that the convergence 
rate in space is of order two and the discrete relative entropy decays
exponentially.
\end{abstract}

\keywords{Cross-diffusion system, finite-volume method,
discrete entropy dissipation, discrete chain rule, positivity,
convergence of the scheme.}

\subjclass[2000]{65M08, 65M12, 35K51, 35Q92, 92C37.}

\maketitle


\section{Introduction}

Cross-diffusion is a phenomenon in multi-species systems, in which the gradient 
of the concentration of one species induces a flux of the other species.
Examples include gas mixtures, ion transport through membranes, and 
tumor-growth models.
Mathematically, cross-diffusion is described by quasilinear parabolic equations
with a non-diagonal diffusion matrix. The analysis of such systems is challenging since
the diffusion matrix is generally neither symmetric nor positive semidefinite,
and standard tools like maximum principles and regularity theory generally
do not apply. In recent years, it has been found
that a class of cross-diffusion systems, which describe volume-filling effects 
in mixtures, possess global bounded weak solutions
\cite{BDPS10,Jue15}. The existence proof is based on the formal gradient-flow
or entropy structure of the cross-diffusion equations, leading to the so-called 
boundedness-by-entropy method. In this paper,
we develop a discrete version of this method for finite-volume approximations
of cross-diffusion systems, preserving the structure of the continuous equations,
namely nonnegativity, boundedness, mass control, and entropy dissipation,
and converging to the continuous equations when the mesh parameters tend to zero.

\subsection{The boundedness-by-entropy method}

We consider the cross-diffusion system
\begin{equation}\label{1.eq}
  \pa_t u_i + \diver\bigg(-\sum_{j=1}^n A_{ij}(u)\na u_j\bigg) = 0,
  \quad\mbox{in }\Omega,\ t>0,\ i=1,\ldots,n,
\end{equation}
where $u=(u_1,\ldots,u_n)$ is the vector of volume or mass fractions
and $\Omega\subset\R^2$ is a bounded domain. 
The volume or mass fraction of the solvent $u_0$
is defined by $u_0=1-\sum_{i=1}^n u_i$ such that the identity
$\sum_{i=0}^n u_i=1$ is fulfilled.
These equations describe the evolution of fluid mixtures or multicomponent systems
\cite{Jue16}. We prescribe no-flux boundary and initial conditions:
\begin{equation}\label{1.bic}
  \sum_{j=1}^n A_{ij}(u)\na u_j\cdot\nu = 0\quad\mbox{on }\pa\Omega,\ t>0, \quad
	u_i(0)=u_i^0\quad\mbox{in }\Omega,
\end{equation}
where $\nu$ denotes the exterior unit normal vector to $\pa\Omega$. 
The diffusion matrix $A(u)=(A_{ij}(u))$ is generally neither symmetric nor
positive definite obstructing the use of standard techniques.
This problem can be overcome when the system possesses a
formal gradient-flow or entropy structure. In the following, we explain
this structure. 

First, we introduce the open simplex 
$$
  D = \bigg\{u=(u_1,\ldots,u_n)\in(0,1)^n:\sum_{i=1}^n u_i<1\bigg\}
$$ 
and let a strictly convex function $h\in C^2(D;[0,\infty))$
with $h(u)=\sum_{i=0}^n h_i(u_i)$ be given. Note that $u_0$ is a function
of $u=(u_1,\ldots,u_n)$, so $h$ depends only on $(u_1,\ldots,u_n)$.
Furthermore, we set ${\mathcal H}[u]=\int_\Omega h(u)dx$. Choosing $h'(u)$
as a test function in the weak formulation of \eqref{1.eq}, a formal computation
gives 
\begin{equation}\label{1.ei}
  \frac{d{\mathcal H}}{dt} 
	+ \int_\Omega \na u:h''(u)A(u)\na u dx =0, \quad 0<t<T,
\end{equation}
where $h''(u)$ is the Hessian of $h$, and ``:'' is the Frobenius matrix product. 
If $h''(u)A(u)$ is positive (semi-) definite,
we call $h$ an entropy density, ${\mathcal H}$ an entropy,
and \eqref{1.ei} an entropy (dissipation) inequality. In many applications, 
there exist $c_A>0$ and $0<s\le 1$ such that for $z\in\R^n$ and $u\in D$,
$$
  z^\top h''(u)A(u)z \ge c_A\sum_{i=1}^n u_i^{2(s-1)}z_i^2.
$$
This means that $h''(u)A(u)$ is positive definite but possibly involving a
singularity at $u_i=0$. We refer to \cite{Jue15} and Section \ref{sec.exam} 
for some examples. 
In this situation, \eqref{1.ei} provides an $L^2(\Omega)$ estimate for $\na u_i^s$. 
Moreover, if $h':D\to\R^n$ is invertible, we conclude an $L^\infty(\Omega)$
bound for $u_i$. Indeed, the strategy of the existence analysis 
is to solve \eqref{1.eq} in the entropy variable $w=h'(u)$ 
and to define the volume fractions a posteriori via $u=(h')^{-1}(w)$. Since
$(h')^{-1}:\R^n\to D$, it holds that 
$u(x,t)\in D$, which gives the desired $L^\infty(\Omega)$ bound. 
With these tools, the global existence of bounded weak solutions 
to \eqref{1.eq}--\eqref{1.bic} can be proved \cite{Jue15}. 

The existence result may be surprising in view of the fact that the diffusion
matrix in \eqref{1.eq} may be not positive (semi-) definite, but it
can be understood by observing that
the positive definiteness of $h''(u)A(u)$ implies that equations \eqref{1.eq}
are parabolic in the sense of Petrovskii \cite[Remark 4.3]{Jue16}.
Another explanation is that equations \eqref{1.eq} can be written equivalently as
\begin{equation}\label{1.B}
  \pa_t u_i(w) - \diver\bigg(\sum_{j=1}^n B_{ij}(w)\na w\bigg) = 0,
	\quad i=1,\ldots,n,
\end{equation}
where the so-called Onsager matrix
$B=(B_{ij})$, defined by $B(w)=A(u(w))h''(u(w))^{-1}$, is positive (semi-) definite.
The task is to ``translate'' this strategy to a finite-volume setting.


\subsection{Key ideas}

The derivation of the entropy inequality \eqref{1.ei} is based on the chain rule
$h''(u)\na u=\na h'(u)$. To formulate a discrete version,
we assume that $\Omega$ is the union of cells $K$
and let $\sigma=K|L$ be the edge between two neighboring cells $K$ and $L$ 
inside of $\Omega$.
The discrete volume fraction $u_i$ is constant on each cell, and we write $u_{i,K}$
for its value. The value on the edge is denoted by $u_{i,\sigma}$. 

When the entropy density equals the sum $h(u)=\sum_{i=1}^n h_i(u_i)$, the Hessian
$h''(u)$ is diagonal, and the discrete chain rule can be formulated componentwise as
\begin{equation}\label{1.mean}
  h_i''(u_{i,\sigma})(u_{i,K}-u_{i,L}) = h_i'(u_{i,K})-h_i'(u_{i,L})
	\quad\mbox{for }\sigma=K|L.
\end{equation}
If $h_i''$ is strictly monotone, 
there exists a unique solution $u_{i,\sigma}$
to \eqref{1.mean} by the mean-value theorem.
In the case of the Boltzmann entropy $h_i(u_i)=u_i(\log u_i-1)+1$, this leads
to the logarithmic mean 
\begin{equation}\label{1.logmean}
  u_{i,\sigma} = \frac{u_{i,K}-u_{i,L}}{\log u_{i,K}-\log u_{i,L}},
\end{equation}
which has been used to develop entropy-conservative schemes for hyperbolic 
conservation laws \cite{IsRo09} and entropy-dissipative schemes for drift-diffusion
equations \cite{Bes12}. 

In the present case, we assume that $h(u)=\sum_{i=0}^n h_i(u_i)$.
Unfortunately, the Hessian $h''(u)$ is not diagonal,
since $\pa^2 h/(\pa u_i\pa u_j)=\delta_{ij}h_i''(u_i)+h_0''(u_0)$.
Then the vector-valued mean-value theorem does not allow us to determine
$u_{i,\sigma}$ like in \eqref{1.mean}. 
We overcome this issue by introducing two ideas.

Our {\em first idea} is to define $u_{0,K}=1-\sum_{i=1}^n u_{i,K}$ on the cells
but to define $u_{0,\sigma}$ (as well as $u_{i,\sigma}$ for $i=1,\ldots,n$)
from \eqref{1.mean}. Thus, in general, 
$u_{0,\sigma}\neq 1-\sum_{i=1}^n u_{i,\sigma}$. 
We set $u_K=(u_{1,K},\ldots,u_{K,n})$,
$u_\sigma=(u_{0,\sigma},\ldots,u_{n,\sigma})$, and 
$H_{ij}(u_\sigma) = \delta_{ij}h_i''(u_{i,\sigma})+h_0''(u_{0,\sigma})$
for $i,j=1,\ldots,n$.
The matrix $H(u_\sigma)=(H_{ij}(u_\sigma))$ is similar to the Hessian 
$h''$ with the exception that we use
$u_{0,\sigma}$ as the argument of $h_0''$ and not
$1-\sum_{i=1}^n u_{i,\sigma}$. 
This means that $H$ depends on all variables $u_{0,\sigma},\ldots,u_{n,\sigma}$,
while $h$ is a function of $u_{1,K},\ldots,u_{n,K}$.
We prove in Lemma \ref{lem.chain} that
\begin{equation}\label{1.chain}
  \sum_{j=1}^n H_{ij}(u_\sigma)(u_{j,K}-u_{j,L}) = (h'(u_K)-h'(u_L))_i, \quad
	i=1,\ldots,n,
\end{equation}
holds, which is the desired discrete chain rule. 

Furthermore, we need the positive (semi-) definiteness of
$h''(u)A(u)$ at $u_\sigma$. Since we have replaced $h''$ by the matrix
$H(u_\sigma)$, which contains the new variable $u_{0,\sigma}$, we cannot
evaluate $A(u)$ at $(u_{1,\sigma},\ldots,u_{n,\sigma})$. 
Instead, our {\em second idea}
is to interpret the diffusion matrix $A$ as a function of 
$u_\sigma=(u_{0,\sigma},\ldots,u_{n,\sigma})$, 
called $A_\sigma$, and to impose the positive definiteness condition
\begin{equation}\label{1.posdef}
  z^\top H(u_\sigma)A_\sigma(u_\sigma)z \ge c_A\sum_{i=1}^n u_{i,\sigma}^{2(s-1)}z_i^2
	\quad\mbox{for all }z\in\R^n.
\end{equation}


\subsection{An illustrative example}\label{sec.illu.ex}

Let us explain the second idea on a simple example with the diffusion matrix
\begin{equation}\label{1.exam}
  A = \begin{pmatrix}
	1-u_1 & -u_1 \\ -u_2 & 1-u_2
	\end{pmatrix}\quad\mbox{for }u=(u_1,u_2)\in D.
\end{equation}
Equations \eqref{1.eq} with this diffusion matrix can be formally derived
in the diffusion limit from the Euler equations with friction forces
\cite[Example 4.3]{Jue16}. 
We present in Section \ref{sec.exam} further examples.
We choose the entropy density
$$
  h(u) = \sum_{i=0}^2 u_i(\log u_i-1)+3, \quad\mbox{where }u_0=1-u_1-u_2,
$$
and compute
$$
  h''(u) = \begin{pmatrix}
	1/u_1+1/u_0 & 1/u_0 \\ 1/u_0 & 1/u_2+1/u_0
	\end{pmatrix}, \quad
	h''(u)A(u) = \begin{pmatrix}
	1/u_1 & 0 \\ 0 & 1/u_2
	\end{pmatrix}.
$$
This shows that $h''(u)A(u)$ is positive definite with
$z^\top h''(u)A(u)z=z_1^2/u_1+z_2^2/u_2$ for all $u\in D$ and $z\in\R^2$.

For the numerical approximation, we write the diffusion matrix as
$$
  \widetilde{A}(u_0,u) = \frac{1}{a(u)}\begin{pmatrix}
  u_0+u_2 & -u_1 \\ -u_2 & u_0+u_1
  \end{pmatrix}, \quad\mbox{where } a(u)=u_0+u_1+u_2.
$$
Of course, this matrix and \eqref{1.exam} coincide if the identity $u_0+u_1+u_2=1$
holds. In the numerical scheme, we do not impose this condition on the edges.
Instead, we define
$$
  H(u_\sigma) = \begin{pmatrix}
	1/u_{1,\sigma}+1/u_{0,\sigma} & 1/u_{0,\sigma} \\
	1/u_{0,\sigma} & 1/u_{2,\sigma}+1/u_{0,\sigma}
	\end{pmatrix}
$$
and $A_\sigma(u_\sigma):=\widetilde{A}(u_\sigma)$, where $u_{i,\sigma}$ 
for $i=0,1,2$ is given by \eqref{1.logmean}. 
Although $u_{0,\sigma}+u_{1,\sigma}+u_{2,\sigma}\neq 1$ is not guaranteed, 
we find that
$$
  H(u_\sigma)A_\sigma(u_\sigma) = \begin{pmatrix}
	1/u_{1,\sigma} & 0 \\ 0 & 1/u_{2,\sigma}
	\end{pmatrix},
$$
and \eqref{1.posdef} is satisfied for $s=1/2$. Note the factor $a(u_\sigma)$
in the definition of $A_\sigma$ is crucial for this result and that 
$u_{i,\sigma}>0$ for all $i$ cannot be guaranteed in general. However, we prove that
$0<a(u_\sigma)\le 1$ holds in the present case; 
see the paragraph after Theorem \ref{thm.ex}.

When $1/a(u_\sigma)>1$, this factor may be interpreted as an artificial
diffusion coefficient. Yet, even if in general $a(u_\sigma) \neq 1$, as $a(u_\sigma)$ is the sum of the logarithmic mean between $u_{i,K}$ and $u_{i,L}$ 
of {\em each} species, we always observe in our numerical experiments (not shown 
here) that $a(u_\sigma)$ is of order one. Moreover,
the solution is very close to that one obtained from a finite-volume scheme with arithmetic mean (which preserves the volume--filling constraint) instead of the logarithmic mean. Thus, as expected, this factor does {\em not} lead to over-diffusive results. Let us notice that the situation may be different if, for instance, $a(u_\sigma)$ was only given by $u_{1,\sigma}$ or $u_{2,\sigma}$ (or even $u_{1,\sigma}+u_{2,\sigma}$). In this case, we can build initial data such that $1/a(u_{\sigma})$ is much larger than one at least for the first time steps, leading to over-diffusive solutions.


\subsection{State of the art}

First existence results for cross-diffusion systems were stated under 
restrictive conditions on the
nonlinearities \cite{LSU68}. Amann \cite{Ama89} showed that
weak solutions to strongly coupled parabolic systems exist globally
if their $W^{1,p}$ norm with $p>d$ can be controlled. 
Alt and Luckhaus \cite{AlLu83} proved global existence results for systems of 
the form \eqref{1.B} with uniformly positive definite Onsager matrices.
Global {\em bounded} weak solutions were shown for a special cross-diffusion
system with volume-filling effects by Burger et al.\ \cite{BDPS10} and
later for a general class of systems in \cite{Jue15}, based on the underlying
entropy structure. Such systems arise naturally in the modeling of gas mixtures 
and in multi-species population dynamics \cite{Jue16,ZaJu17}. 

Structure-preserving finite-volume-type schemes were first designed for 
hyperbolic conservation laws fulfilling entropy stability or entropy conservation 
\cite{Tad87}. The application to cross-diffusion systems is more recent.
A convergence study of a finite-volume approximation for a nondegenerate
cross-diffusion problem was carried out in \cite{ABR11}, based on classical
quadratic energy estimates. Finite-volume approximations that satisfy a discrete
entropy inequality were suggested and analyzed in 
\cite{CCGJ19,CEM20,CaGa20,DJZ20,JuZu19,JuZu20}, while
finite-volume schemes for cross-diffusion systems preserving the volume-filling
constraints were developed in \cite{CCGJ19,CEM20,DJZ20,IbSa14}.  

The preservation of the entropy structure is achieved by designing a discrete
chain rule. In the literature, the elementary inequality 
$(u-v)(\log u-\log v)\ge 4(\sqrt{u}-\sqrt{v})^2$ is used as 
a discrete version of the chain rule $\na u\cdot\na\log u=4|\na\sqrt{u}|^2$
\cite{CCGJ19,Gli11} and the logarithmic mean \eqref{1.logmean} 
as a discrete version of the chain rule $u\na\log u=\na u$
\cite{Bes12,CaGa20,GaGa05}. 
The more general chain rule \eqref{1.mean} was suggested in
our previous work \cite{JuZu20}. 
In all these examples, the discrete chain rule is defined componentwise. 
 
In the discrete gradient method for differential equations,
related discrete chain rules are formulated to achieve energy conservation
or entropy dissipation; see the review \cite{MQR99}.
Examples are given by the Gonzales scheme \cite{Gon96} and the 
mean-value discrete gradient \cite{HLV83}. The latter technique was extended to the
average vector field method \cite{CGMM12}, which uses an average of
the differential operator $\diver(B\na w)$ and is based on the vector-valued 
mean-value theorem. However, it seems to be difficult to extract gradient estimates
from these discrete gradients.
Here, we consider for the first time (up to our knowledge)
a vector-valued chain rule leading to gradient estimates.

Let us also mention related approaches for diffusion problems.
For finite-difference schemes, 
entropy-stable and entropy-dissipative discretizations were developed
in, e.g., \cite{JePa17,KRS16,LiYu14}, extending Tadmor's framework or
using upwind approximations. When the equations possess a variational
structure involving the variational derivative of the energy/entropy,
discrete variational derivatives were defined in \cite{FuMa11}.
This approach was extended to fourth-order parabolic equations \cite{Buk20},
but it seems not to cover cross-diffusion systems. 
Energy-dissipative schemes for scalar equations
were developed also for higher-order time
integrations; see, e.g., \cite{HaLu14} for Runge--Kutta methods and
\cite{JuMi15} for one-leg multistep methods. Unfortunately, these techniques
cannot be easily adapted to our setting. Interesting
approaches are the discontinuous Galerkin time discretization of \cite{Egg19},
whose use in finite-volume schemes has still to be explored, and the
space-time Galerkin approach of \cite{BPS20}, which needs a regularizing term.

The paper is organized as follows. The numerical scheme and our main results 
(existence of discrete solutions, positivity, and convergence of the scheme) 
are introduced in Section \ref{sec.main}. We present some examples of
cross-diffusion models that satisfy our main assumptions in Section \ref{sec.exam}.
In Section \ref{sec.ex}, the existence of solutions is proved, while the convergence
of the scheme is shown in Section \ref{sec.conv}. Finally, some numerical
examples are presented in Section \ref{sec.num}.


\section{Numerical scheme and main results}\label{sec.main}

\subsection{Notation and definitions}

Let $\Omega\subset\R^2$ be a bounded, polygonal domain.
We consider only two-dimensional domains, but the generalization
to higher space dimensions is straightforward.
An admissible mesh of $\Omega$ is given by (i) a family $\T$
of open polygonal control volumes (or cells), (ii) a family $\E$ of edges, and
(iii) a family ${\mathcal P}$ of points $(x_K)_{K\in\T}$ associated to the
control volumes and satisfying Definition 9.1 in \cite{EGH00}. This definition
implies that the straight line $\overline{x_Kx_L}$ between two centers of
neighboring cells is orthogonal to the edge $\sigma=K|L$ between two cells.
For instance, Vorono\"{\i} meshes satisfy this condition \cite[Example 9.2]{EGH00}.
The size of the mesh is denoted by $\Delta x = \max_{K\in\T}\operatorname{diam}(K)$.
The family of edges $\E$ is assumed to consist of interior edges $\E_{\rm int}$
satisfying $\sigma\subset\Omega$ and boundary edges $\sigma\in\E_{\rm ext}$ satisfying
$\sigma\subset\pa\Omega$. For given $K\in\T$, $\E_K$ is the set of edges of $K$,
and it splits into $\E_K=\Eint\cup\E_{{\rm ext},K}$. 
For any $\sigma\in\E$, there exists at least one cell $K\in\T$ such that 
$\sigma\in\E_K$.

We need the following definitions. For $\sigma\in\E$, we introduce the distance
$$
  \dist_\sigma = \begin{cases}
	\dist(x_K,x_L) &\quad\mbox{if }\sigma=K|L\in\E_{{\rm int},K}, \\
	\dist(x_K,\sigma) &\quad\mbox{if }\sigma\in\E_{{\rm ext},K},
	\end{cases}
$$
where d is the Euclidean distance in $\R^2$, and the transmissibility coefficient
\begin{equation}\label{2.trans}
  \tau_\sigma = \frac{\m(\sigma)}{\dist_\sigma},
\end{equation}
where $\m(\sigma)$ denotes the Lebesgue measure of $\sigma$.
The mesh is assumed to satisfy the following regularity assumption: There exists
$\zeta>0$ such that for all $K\in\T$ and $\sigma\in\E_K$,
\begin{equation}\label{2.dd}
  \dist(x_K,\sigma)\ge \zeta \dist_\sigma.
\end{equation}

Let $T>0$, $N_T\in\N$ and introduce the time step size
$\Delta t=T/N_T$ as well as the time steps $t_k=k\Delta t$ for $k=0,\ldots,N_T$. 
We denote by ${\mathcal D}$
the admissible space-time discretization of $\Omega_T=\Omega\times(0,T)$ 
composed of an admissible mesh ${\mathcal T}$ and the values $(\Delta t,N_T)$. 

Next, we introduce the functional spaces.
The space of piecewise constant functions is defined by 
$$
  V_\T = \bigg\{v: \Omega\to\R:\exists (v_K)_{K\in\T}\subset\R,\
	v(x)=\sum_{K\in\T}v_K\mathbf{1}_K(x)\bigg\},
$$
where $\mathbf{1}_K$ is the characteristic function on $K$.
In order to define a norm on this space, we first introduce the notation
$$
  v_{K,\sigma} = \begin{cases}
	v_L &\quad\mbox{if }\sigma=K|L\in\Eint, \\
	v_K &\quad\mbox{if }\sigma\in\E_{{\rm ext},K},
	\end{cases} 
$$
for $K\in\T$, $\sigma\in\E_K$ and the discrete operators
$$
  \textrm{D}_{K,\sigma} v := v_{K,\sigma}-v_K, \quad 
	\textrm{D}_\sigma v := |\mathrm{D}_{K,\sigma} v|.
$$

The (squared) $L^2$ norm, the discrete 
$H^1$ seminorm, and the discrete $H^1$ norm on $V_\T$ are given by, respectively,
\begin{align*}
  \|v\|_{0,2,\T}^2 &= \sum_{K\in\T}\m(K)|v_K|^2, \\
  |v|_{1,2,\T}^2 &= \sum_{\sigma\in\E}\tau_\sigma|\text{D}_{\sigma} v|^2, \\
	\|v\|_{1,2,\T}^2 &= |v|^2_{1,2,\T} + \|v\|^2_{0,2,\T}.
\end{align*}
We associate to these norms a dual norm with respect to the
$L^2$ inner product,
$$
  \|v\|_{-1,2,\T} = \sup\bigg\{\int_\Omega vw dx: w\in V_\T,\
	\|w\|_{1,2,\T}=1\bigg\}.
$$
It holds that
$$
  \bigg|\int_\Omega vw dx\bigg| \le \|v\|_{-1,2,\T}\|w\|_{1,2,\T}
	\quad\mbox{for }v,w\in V_\T.
$$

Finally, we introduce the space $V_{\T,\Delta t}$ of piecewise constant
functions with values in $V_\T$,
$$
  V_{\T,\Delta t} = \bigg\{ v:\overline\Omega\times[0,T]\to\R:
	\exists(v^k)_{k=1,\ldots,N_T}\subset V_\T,\
	v(x,t) = \sum_{k=1}^{N_T} v^k(x)\mathbf{1}_{(t_{k-1},t_k]}(t)\bigg\},
$$
equipped with the discrete $L^2(0,T;H^1(\Omega))$ norm
$$
  \left(\sum_{k=1}^{N_T}\Delta t \|v^k\|_{1,2,\T}^{2}\right)^{1/2} \quad 
	\mbox{for all }v\in V_{\T,\Delta t}.
$$ 


\subsection{Numerical scheme}

We define the finite-volume scheme for the cross-diffusion model \eqref{1.eq} 
and \eqref{1.bic}. We first approximate the initial functions by
\begin{equation}\label{2.init}
  u_{i,K}^0 = \frac{1}{\m(K)}\int_K u_i^0(x)dx \quad\mbox{for }K\in\T,\ 
	i=0,\ldots,n.
\end{equation}
Let $u_K^{k-1}=(u_{1,K}^{k-1},\ldots,u_{n,K}^{k-1})$ and 
$u_{0,K}^{k-1} = 1-\sum_{i=1}^n u_{i,K}^{k-1}$
be given for $K \in \T$. Then the values $u_{i,K}^k$
are determined by the implicit Euler finite-volume scheme
\begin{align}\label{2.fvm}
  & \m(K)\frac{u_{i,K}^k-u_{i,K}^{k-1}}{\Delta t} 
	+ \sum_{\sigma\in\E_K}\mathcal{F}_{i,K,\sigma}^k = 0, \\
  \label{2.flux}
  & \mathcal{F}_{i,K,\sigma}^k = - \sum_{j=1}^n \tau_\sigma 
	A_{ij,\sigma}(u^k_\sigma) 
	\textrm{D}_{K,\sigma} u^k_j \quad\mbox{for }K\in \T,\ \sigma \in \E_K,
\end{align}
and $\tau_\sigma$ is defined by \eqref{2.trans}. 
The matrix $A_\sigma=(A_{ij,\sigma})$ satisfies $A_\sigma(u_0,u)=A(u)$ for all
$u\in D$ and $u_0=1-\sum_{i=1}^n u_i$. By definition of the
discrete operator $\textrm{D}_{K,\sigma}$, the discrete fluxes vanish on
the boundary edges, guaranteeing the no-flux boundary conditions. Thus, 
we only need to define in \eqref{2.flux} the mean vector $u^k_\sigma = (u^k_{0,\sigma},
\ldots,u^k_{n,\sigma})$ for every $\sigma =K|L \in \E_{\rm int}$:
\begin{equation}\label{2.usigma}
  u^k_{i,\sigma} = \begin{cases}
  \widetilde u_{i,\sigma}^k
	\quad & \mbox{if } u^k_{i,K} > 0, \ u^k_{i,L} > 0, \mbox{ and } 
	u^k_{i,K} \neq u^k_{i,L}, \\
  u^k_{i,K} & \mbox{if } u^k_{i,K} = u^k_{i,L} > 0, \\
  0 & \mbox{else},
  \end{cases}   
\end{equation}
where $\widetilde u_{i,K}^k\in(0,1)$ is the unique solution to
\begin{equation}\label{2.chain}
  h_i''(\widetilde u_{i,\sigma}^k)\textrm{D}_{K,\sigma}u_i^k
	= \textrm{D}_{K,\sigma} h_i'(u_i^k)\quad\mbox{for }K\in\T,\ \sigma\in
	\E_{\mathrm{int},K},\	i=0,\ldots,n.
\end{equation}
If $h''_i$ is continuous and strictly monotone, the existence of a unique value 
$\widetilde u_{i,\sigma}^k$ follows from the mean-value theorem. 
Moreover, if $u^k_{i,K}$, $u^k_{i,L} \ge 0$ we have for $i=0,\ldots,n$,
$$
  0 \leq \min\{u_{i,K}^k,u_{i,L}^k\} \leq u_{i,\sigma}^k
	\leq \max\{u_{i,K}^k,u_{i,L}^k\}\le 1.
$$


\subsection{Main results}\label{ssec.main}

Given the entropy density $h(u)=\sum_{i=1}^n h_i(u_i)+h_0(u_0)$, 
we define for $u\in V_{\T}$ the discrete entropy
$$
  {\mathcal H}[u] = \sum_{K\in\T}\m(K)h(u_K),
$$
and replace the Hessian $h''$ by the matrix
\begin{equation}\label{2.H}
  H_{ij}(u_\sigma) = \delta_{ij}h_i''(u_{i,\sigma}) + h_0''(u_{0,\sigma}), 
	\quad u_\sigma\in(0,1)^{n+1},\ i,j=1,\ldots,n,
\end{equation}
where $u_{i,\sigma}$ is defined by \eqref{2.usigma}.
Note that this matrix is symmetric and positive definite (if $h_i$ is strictly convex).

We impose the following hypotheses:

\begin{labeling}{(A44)}
\item[\textbf{(H1)}] Domain: $\Omega\subset\R^2$ is a bounded polygonal domain
and $D=\{u=(u_1,\ldots,u_n)\in(0,1)^n:\sum_{i=1}^n u_i<1\}$.

\item[\textbf{(H2)}] Discretization: $\mathcal{D}$ is an admissible discretization of 
$\Omega_T=\Omega\times(0,T)$ satisfying \eqref{2.dd}.

\item[\textbf{(H3)}] Initial data: $u^0=(u^0_1,\ldots,u_n^0) \in 
L^1(\Omega;D)$ satisfies $\int_\Omega h(u^0) dx < \infty$. 
We set $u_0^0=1-\sum_{i=1}^n u_i^0$.

\item[\textbf{(H4)}] Entropy density: $h(u)=\sum_{i=1}^n h_i(u_i)+h_0(u_0)$
for $u\in D$ and $u_0=1-\sum_{i=1}^n u_i$, where
$h\in C^0(\overline{D};[0;\infty))$ is convex,
$h':D\to\R$ is invertible, $h_i\in C^2(0,1)$, $h_i''$ is strictly decreasing, 
and there exists $c_h>0$ 
such that $h_i(x) \geq c_h(x-1)$ for all $0 \leq x \leq 1$, $i=1,\ldots,n$.

\item[\textbf{(H5)}] Diffusion matrix: $A \in C^{0,1}(\overline{D};\R^{n\times n})$ 
and there exists a matrix $A_\sigma\in C^{0,1}([0,1]\times (0,1)^n;\R^{n\times n})$ 
such that $A(u)=A_\sigma(u_\sigma)$ for 
all $u\in D$ with $u_i=u_{i,\sigma}$ for $i=1,\ldots,n$ and 
$u_{0,\sigma}=1-\sum_{i=1}^n u_i$. We assume that $\|A_\sigma(0,u)\|<\infty$, 
where $\|\cdot\|$ denotes some matrix norm, for all $u=(u_1,\ldots,u_n) \in (0,1)^n$ 
satisfying $\sum_{i=1}^n u_i \leq 1$, and there exist numbers $c_A>0$, $0<s< 1$
such that for all $z\in\R^n$ and for some $u_\sigma\in(0,1)^{n+1}$,
$$
	z^\top H(u_\sigma)A_\sigma(u_\sigma)z 
	\ge c_A\sum_{i=1}^n u_{i,\sigma}^{2(s-1)}z_i^2.
$$
\end{labeling}

These assumptions include all hypotheses needed in the boundedness-by-entropy
method; see \cite{Jue15}. We also need additional conditions. 
First, the entropy density in Hypothesis (H4)
has a particular structure including the Boltzmann entropy
for volume-filling models. The strict monotonicity of $h''_i$ is required 
to define properly the mean value $\widetilde u^k_{i,\sigma}$ in \eqref{2.usigma}.
Admissible examples are $h_i(s)=s(\log s-1)+1$ and, more generally, 
$h_i(s)=\int_a^s\log q(z)dz$ with $a\in(0,1)$, $q\in C^2(0,1)\cap C^0([0,1])$ 
is strictly monotone, $qq''>(q')^2$, $q(0)=0$, and $q(0)/q'(0)=0$.

Second, the positive definiteness condition in Hypothesis (H5)
is formulated for the matrix
$H(u_\sigma)$, which replaces the Hessian $h''$, and the modified diffusion matrix
$A_\sigma(u_\sigma)$. This modification is needed to take care of the fact
that $u_{0,\sigma}$ can generally not be 
identified with $1-\sum_{i=1}^n u_{i,\sigma}$.
If this identification is possible, the matrices $h''A$ and $HA_\sigma$ coincide.
The Lipschitz continuity of $A_{\sigma}$ is needed in the proof of the convergence
of the scheme but not for the existence analysis. 
We prove below (see Theorem \ref{thm.ex}) that $u^k_{i,\sigma}>0$ holds
for all $i=1,\ldots,n$ but only $u_{0,\sigma}^k\ge 0$. 
Our analysis can be extended under suitable assumptions to the case $s=1$ and 
allowing for source terms in \eqref{1.eq}; see Remarks \ref{rem.s1}, \ref{rem.source},
and \ref{rem.conv}.

The first main result is as follows.

\begin{theorem}[Existence of discrete solutions]\label{thm.ex}
Let Hypotheses (H1)--(H5) hold. Then there exists a solution 
$u^k_K=(u^k_{1,K},\ldots,u^k_{n,K})$ to scheme \eqref{2.init}--\eqref{2.usigma} 
satisfying $u^k_K \in D$ for $K \in \T$
and $0 < u^k_{i,\sigma} < 1$ for $\sigma \in \E_{\rm int}$, $k \geq 1$, 
$i=1,\ldots,n$. Moreover, the following discrete entropy inequality holds:
\begin{equation}\label{2.ei}
  \mathcal{H}[u^k] 	+ c_A\Delta t\sum_{i=1}^n\sum_{\sigma\in\E_{\rm{int}}}\tau_\sigma 
	u_{i,\sigma}^{2(s-1)}(\mathrm{D}_\sigma u_i^k)^2 
	\le \mathcal{H}[u^{k-1}].
\end{equation}
\end{theorem}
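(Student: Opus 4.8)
The plan is to recast the scheme in the entropy variable and to produce a solution by a Brouwer fixed-point argument, reading off the entropy inequality \eqref{2.ei} directly from the fixed-point identity. I would argue by induction on $k$; the base case $u^0_K\in D$ follows from \eqref{2.init} and (H3), since the cell average of a $D$-valued function lies in the open simplex $D$. Assume $u^{k-1}_K\in D$ for all $K\in\T$. As (H4) makes $h'\colon D\to\R^n$ a homeomorphism, I parametrise the unknowns by $w=(w_K)_{K\in\T}$ with $w_K\in\R^n$ and set $u_K:=(h')^{-1}(w_K)\in D$; this automatically encodes $u_K\in D$, i.e.\ $u_{i,K}>0$ for $i=0,\dots,n$ and $\sum_{i=1}^n u_{i,K}<1$. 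On this finite-dimensional space I define the residual map $\Phi$ by $\Phi_{i,K}(w)=\m(K)(u_{i,K}-u^{k-1}_{i,K})/\Delta t+\sum_{\sigma\in\E_K}\mathcal{F}^k_{i,K,\sigma}$, with the fluxes \eqref{2.flux} and edge values \eqref{2.usigma} evaluated at $u=(h')^{-1}(w)$. Because $u_K\in D$ forces $u_{i,K}>0$ for every cell and every $i$, the degenerate branch $u_{i,\sigma}=0$ in \eqref{2.usigma} never occurs, so $u_\sigma$ depends continuously on $w$ (the logarithmic mean extends continuously across $u_{i,K}=u_{i,L}$) and $\Phi$ is continuous. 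Zeros of $\Phi$ are exactly the solutions of \eqref{2.fvm}.

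To locate a zero I would invoke the standard corollary of Brouwer's theorem: if $\langle\Phi(w),w\rangle\ge 0$ on some sphere $\{|w|=R\}$, then $\Phi$ vanishes somewhere in the enclosed ball. Testing $\Phi$ with $w$ and summing by parts (the fluxes are antisymmetric and vanish on boundary edges) yields
\[
  \langle\Phi(w),w\rangle=\frac{1}{\Delta t}\sum_{K\in\T}\m(K)(u_K-u^{k-1}_K)\cdot w_K+\sum_{\sigma\in\E_{\rm int}}\tau_\sigma\, z_\sigma^\top H(u_\sigma)A_\sigma(u_\sigma)z_\sigma,
\]
where $z_\sigma=(\mathrm{D}_{K,\sigma}u_j)_j$ and the edge term is obtained by rewriting $\mathrm{D}_{K,\sigma}w_i=\sum_j H_{ij}(u_\sigma)\mathrm{D}_{K,\sigma}u_j$ via the discrete chain rule of Lemma \ref{lem.chain} (that is, \eqref{1.chain}) together with the symmetry of $H$. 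By (H5) this term is bounded below by $c_A\sum_{\sigma\in\E_{\rm int}}\tau_\sigma\sum_{i=1}^n u_{i,\sigma}^{2(s-1)}(\mathrm{D}_\sigma u_i)^2\ge 0$.

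The coercivity of the first term is the crux, and it is here that strict interiority of $u^{k-1}_K$ is indispensable. Convexity of $h$ gives $(u_K-u^{k-1}_K)\cdot w_K\ge h(u_K)-h(u^{k-1}_K)\ge-\max_{\overline D}h$, so each summand is bounded below uniformly; moreover, since $u^{k-1}_K\in D$, convex duality shows that $w_K\mapsto(u_K-u^{k-1}_K)\cdot w_K\to+\infty$ as $|w_K|\to\infty$, at a rate $\ge\varepsilon_K|w_K|-C$ with $\varepsilon_K>0$ controlled by $\operatorname{dist}(u^{k-1}_K,\pa D)$. Isolating the cell with the largest $|w_K|$ then makes $\langle\Phi(w),w\rangle>0$ once $R$ is large, so Brouwer provides $w^k$ with $\Phi(w^k)=0$. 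Setting $u^k_K=(h')^{-1}(w^k_K)\in D$ closes the induction and, from the min/max bounds following \eqref{2.chain}, gives $0<u^k_{i,\sigma}<1$ for $i=1,\dots,n$ (for $i=0$ only $u^k_{0,\sigma}\ge0$ is retained, as this species is absent from the dissipation sum).

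Finally, the entropy inequality comes for free: $\Phi(w^k)=0$ means $\langle\Phi(w^k),w^k\rangle=0$, so the displayed identity combined with the convexity bound on the time term and the (H5) bound on the edge term, multiplied by $\Delta t$, is precisely \eqref{2.ei}. I expect the main obstacle to be exactly this coercivity step: because $h$ is bounded on $\overline D$, the entropy contribution by itself does not grow, and positivity of $\langle\Phi(w),w\rangle$ on large spheres must instead be extracted from the implicit-Euler term through the interiority of the previous time step. This is the reason the induction must propagate $u^k_K\in D$ in the \emph{open} simplex rather than merely in $\overline D$.
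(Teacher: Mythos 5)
Your proposal is correct, but it takes a genuinely different---and in fact shorter---route than the paper. The paper regularizes the scheme in the entropy variable (adding $\eps$ times a discrete $H^1$-type operator acting on $w^\eps$), proves existence for the regularized problem by a topological degree homotopy, where the admissible radius $R$ is supplied by the entropy inequality whose extra term $\eps\Delta t\sum_i\|w_i^\eps\|_{1,2,\T}^2$ is what bounds the fixed points, and then passes to the limit $\eps\to 0$. In that limit the interiority of $u_K^k$ can be lost, so the paper must recover it a posteriori: conservation of the total mass plus a cell-to-cell propagation argument based on the dissipation term yields $u_{i,K}^k>0$, and this is precisely where the hypothesis $s<1$ enters. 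You instead work with the un-regularized residual and extract the coercivity, which the paper manufactures through the $\eps$-terms, from the implicit Euler term itself: the supporting-hyperplane estimate $w_K\cdot(u_K-u_K^{k-1})\ge \tfrac12\varepsilon_K|w_K|-\max_{\overline D}h$ (take $v=u_K^{k-1}+\tfrac12\varepsilon_K w_K/|w_K|\in D$ in the gradient inequality $w_K\cdot(u_K-v)\ge h(u_K)-h(v)$, using $h\ge 0$ and $h$ bounded on $\overline D$ by (H4)) is valid because $u_K^{k-1}$ lies in the \emph{open} simplex by induction, exactly as you emphasize. Brouwer's ``acute angle'' corollary then gives a zero of $\Phi$, interiority $u_K^k\in D$ is automatic from the parametrization $u=(h')^{-1}(w)$, and \eqref{2.ei} follows from the chain rule of Lemma \ref{lem.chain}, symmetry of $H$, (H5), and convexity, as you describe. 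What your route buys: no regularization, no $\eps\to 0$ limit, no propagation argument, hence no use of $s<1$ anywhere (consistent with, indeed stronger than, Remark \ref{rem.s1}), and your construction even gives $u_{0,\sigma}^k>0$ rather than merely $\ge 0$, so your cautious parenthetical about the species $i=0$ undersells your own result. What the paper's route buys: its radius is controlled by $\mathcal{H}[u^{k-1}]$ alone, whereas yours depends on $\min_{K\in\T}\operatorname{dist}(u_K^{k-1},\partial D)$, a purely finite-dimensional, per-time-step quantity that the entropy does not control and that may degenerate as $k$ grows; the paper's argument also mirrors the continuous boundedness-by-entropy method and so transfers to settings (infinite dimensions, or schemes where the Euler term cannot be tested against $w$ so directly) in which your coercivity mechanism is unavailable. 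For the theorem as stated, both arguments are complete.
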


The proof of Theorem \ref{thm.ex} is based on a topological degree argument
and the entropy estimate \eqref{2.ei}, which follows from the discrete chain rule
\eqref{1.chain}. Generally, we cannot exclude that $\sum_{i=0}^n u_{i,\sigma}^k>1$. 
However, when the entropy is given by the Boltzmann entropy 
$h_i(u_i)=u_i(\log u_i-1)+1$, it holds that $\sum_{i=0}^n u_{i,\sigma}^k\le 1$,
since the logarithmic mean is not larger than the arithmetic mean, i.e.\ 
$u_{i,\sigma}^k\le (u_{i,K}^k+u_{i,L}^k)/2$ for $\sigma=K|L$ and
$\sum_{i=0}^n u_{i,\sigma}^k\le \sum_{i=0}^n(u_{i,K}^k+u_{i,L}^k)/2=1$.
This shows that the volume-filling constraints are fully satisfied.

For the convergence result, we need some notation. For $K\in\T$ and $\sigma\in\E_K$,
we define the cell $T_{K,\sigma}$ of the dual mesh:
\begin{itemize}
\item If $\sigma=K|L\in\E_{{\rm int},K}$, then $T_{K,\sigma}$ is that cell 
(``diamond'') whose
vertices are given by $x_K$, $x_L$, and the end points of the edge $\sigma$.
\item If $\sigma\in\E_{{\rm ext},K}$, then $T_{K,\sigma}$ is that cell (``triangle'')
whose vertices are given by $x_K$ and the end points of the edge $\sigma$.
\end{itemize}
The cells $T_{K,\sigma}$ define a partition of $\Omega$. It follows from the
property that the straight line $\overline{x_Kx_L}$ between two neighboring 
centers of cells is orthogonal to the edge $\sigma=K|L$ that
\begin{equation*}
  \m(\sigma)\dist(x_K,x_L) = 2\m(T_{K,\sigma}) \quad\mbox{for }
	\sigma=K|L\in \E_{\rm int}.
\end{equation*}
The approximate gradient of $v\in V_{\T,\Delta t}$ is defined by
$$
  \na^{\mathcal D} v(x,t) = \frac{\m(\sigma)}{\m(T_{K,\sigma})}
	(\mathrm{D}_{K,\sigma} v^k)\nu_{K,\sigma}
	\quad\mbox{for }x\in T_{K,\sigma},\ t\in(t_{k-1},t_{k}],
$$
where $\nu_{K,\sigma}$ is the unit vector that is normal to $\sigma$ and points
outwards of $K$.

We introduce a family $(\mathcal{D}_m)_{m\in\N}$ of admissible space-time 
discretizations of $\Omega_T$ indexed by the size 
$\eta_m=\max\{\Delta x_m,\Delta t_m\}$ of the mesh, satisfying $\eta_m\to 0$
as $m\to\infty$. We denote by $\T_m$ the corresponding meshes of $\Omega$ and
by $\Delta t_m$ the corresponding time step sizes.  
Finally, we set $\na^m:=\na^{\mathcal{D}_m}$.

\begin{theorem}[Convergence of the scheme]\label{thm.conv}
Let the assumptions of Theorem \ref{thm.ex} hold, let $(\mathcal{D}_m)_{m\in\N}$ 
be a family of admissible meshes satisfying \eqref{2.dd} uniformly in $m\in\N$. Let $(u_m)_{m\in\N}$ be a family
of finite-volume solutions to \eqref{2.init}--\eqref{2.usigma} constructed in
Theorem \ref{thm.ex}. Then there exists a function $u=(u_1,\ldots,u_n)\in
L^2(0,T;H^1(\Omega;\R^n))$ satisfying $u(x,t)\in\overline{D}$ for a.e. $(x,t)\in\Omega_T$ and for $i=1,\ldots,n$,
\begin{align*}
  u_{i,m}\to u_i &\quad\mbox{strongly in }L^p(\Omega_T), \quad 1 \leq p < \infty, \\
	\na^m u_{i,m}\rightharpoonup\na u_i &\quad\mbox{weakly in }L^2(\Omega_T)
	\quad\mbox{as }m\to\infty,
\end{align*}
up to a subsequence,
and $u$ is a weak solution to \eqref{1.eq} and \eqref{1.bic}, i.e., for all
$\psi_i\in C_0^\infty(\Omega\times[0,T))$, it holds that for all $i=1,\ldots,n$,
\begin{equation}\label{2.weak}
  \int_0^T\int_\Omega u_i\pa_t\psi_i dxdt + \int_\Omega u_i^0\psi_i(0)dx 
	= \int_0^T\int_\Omega\sum_{j=1}^n A_{ij}(u)\na u_j\cdot\na\psi_i dxdt.
\end{equation}
\end{theorem}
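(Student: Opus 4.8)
The plan is to follow the classical three-step strategy for finite-volume convergence proofs: derive uniform a priori estimates from the discrete entropy inequality, extract a limit by a compactness argument, and pass to the limit in the weak formulation. First I would establish the uniform bounds. Summing \eqref{2.ei} over $k=1,\dots,N_T$ and telescoping, using $\mathcal H[u^{N_T}]\ge 0$ (since $h\ge 0$) and $\mathcal H[u^0]\le\int_\Omega h(u^0)\,dx<\infty$ from (H3), bounds the weighted dissipation $c_A\Delta t\sum_k\sum_i\sum_{\sigma\in\E_{\rm int}}\tau_\sigma u_{i,\sigma}^{2(s-1)}(\mathrm D_\sigma u_i^k)^2$ by a constant independent of the mesh. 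Because $0<u_{i,\sigma}\le 1$ and $s\le 1$ give $u_{i,\sigma}^{2(s-1)}\ge 1$, this in turn controls $\Delta t\sum_k|u_i^k|_{1,2,\T}^2$; combined with the pointwise bound $0\le u_{i,m}\le 1$ coming from $u_K^k\in D$ in Theorem \ref{thm.ex}, it yields a uniform bound for $u_{i,m}$ in the discrete $L^2(0,T;H^1(\Omega))$ norm. I would then bound the discrete time increments: testing \eqref{2.fvm} against an arbitrary $\phi\in V_\T$, integrating by parts discretely to produce the edge form $\Delta t\sum_\sigma\tau_\sigma\sum_j A_{ij,\sigma}(u_\sigma^k)\mathrm D_{K,\sigma}u_j^k\,\mathrm D_{K,\sigma}\phi$, and using that $A_\sigma$ is bounded on the relevant set (its value at $u_{0,\sigma}=0$ being controlled by (H5)), Cauchy--Schwarz gives $\|u_i^k-u_i^{k-1}\|_{-1,2,\T}\le C\Delta t\,|u_i^k|_{1,2,\T}$ and hence a uniform estimate of the discrete time derivative in the dual norm.

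With these two estimates I would invoke a discrete Aubin--Lions-type compactness lemma (in the spirit of \cite{ABR11}) to extract a subsequence along which $u_{i,m}\to u_i$ strongly in $L^2(\Omega_T)$; interpolating with the uniform $L^\infty$ bound upgrades this to strong $L^p(\Omega_T)$ convergence for every $p<\infty$. The uniform discrete $H^1$ bound yields, up to a further subsequence, a weak $L^2(\Omega_T)$ limit of $\na^m u_{i,m}$, and a standard discrete integration-by-parts argument (as in \cite{EGH00}) identifies this limit with $\na u_i$, so that $u\in L^2(0,T;H^1(\Omega;\R^n))$. Since $u_K^k\in D$ and the convergence is almost everywhere along a subsequence, the limit satisfies $u(x,t)\in\overline D$ for a.e.\ $(x,t)\in\Omega_T$.

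Next I would pass to the limit in the scheme. Multiplying \eqref{2.fvm} by $\psi_i(x_K,t_{k-1})$, summing over $K\in\T$ and $k$, and applying discrete summation by parts produces a discrete analogue of \eqref{2.weak}: the accumulation term converges to $\int_0^T\int_\Omega u_i\pa_t\psi_i\,dx\,dt+\int_\Omega u_i^0\psi_i(0)\,dx$ by the strong convergence and the smoothness of $\psi_i$ (whose compact support in $\Omega\times[0,T)$ kills the boundary and final-time contributions). For the diffusion term I would recast the edge sum $\sum_{\sigma\in\E_{\rm int}}\tau_\sigma A_{ij,\sigma}(u_\sigma^k)\mathrm D_{K,\sigma}u_j^k\,\mathrm D_{K,\sigma}\psi_i$ as an integral over the diamond cells $T_{K,\sigma}$ of $A_{ij,\sigma}(u_\sigma^m)\,\na^m u_{j,m}\cdot\na^m\psi_i$, using the identity $\m(\sigma)\dist(x_K,x_L)=2\m(T_{K,\sigma})$ and the definition of $\na^{\mathcal D}$. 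Since $\psi_i$ is smooth, $\na^m\psi_i\to\na\psi_i$ strongly in $L^2$; combining the weak convergence $\na^m u_{j,m}\rightharpoonup\na u_j$ with the strong convergence $A_{ij,\sigma}(u_\sigma^m)\to A_{ij}(u)$ established below, the weak-times-strong structure lets me identify the limit of the diffusion term with $\int_0^T\int_\Omega\sum_j A_{ij}(u)\na u_j\cdot\na\psi_i\,dx\,dt$, which is exactly \eqref{2.weak}.

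I expect the main obstacle to be precisely the convergence $A_{ij,\sigma}(u_\sigma^m)\to A_{ij}(u)$. This reduces to showing that the edge-value reconstructions $u_{i,\sigma}^m$ converge strongly in $L^p(\Omega_T)$ to $u_i$ for $i=1,\dots,n$ and, crucially, that $u_{0,\sigma}^m$ converges to $u_0=1-\sum_{i=1}^n u_i$---the delicate point being that by construction $u_{0,\sigma}\neq 1-\sum_{i=1}^n u_{i,\sigma}$ on the edges. The mechanism is that each $u_{i,\sigma}^m$ (for $i=0,\dots,n$) lies between the two neighboring cell values, as noted after \eqref{2.chain}, and both of these converge strongly to the same limit ($u_i$ for $i\ge 1$, and $1-\sum_j u_j$ for $i=0$, since $u_{0,K}=1-\sum_j u_{j,K}$). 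Estimating the space-time $L^2$ distance between the edge reconstruction and a cell reconstruction by a quantity of order $(\Delta x_m)^2\,\Delta t\sum_k|u_i^k|_{1,2,\T}^2$---which uses the mesh regularity $\dist_\sigma\le C\Delta x_m$, the relation $\m(T_{K,\sigma})=\tfrac12\m(\sigma)\dist_\sigma$, and the uniform bound on $\Delta t\sum_k|u_i^k|_{1,2,\T}^2$---shows it vanishes as $m\to\infty$, forcing the edge reconstructions to share the cell limit. Finally, the Lipschitz continuity of $A_\sigma$ from (H5), together with its uniform boundedness, transfers this strong convergence to $A_\sigma(u_\sigma^m)\to A(u)$ and closes the argument.
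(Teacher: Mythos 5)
Your Steps 1 and 2 (entropy-based a priori estimates, dual-norm bound on the discrete time derivative, discrete Aubin--Lions compactness, weak convergence of the reconstructed gradients) coincide with the paper's proof. Your handling of the edge values is also sound and is essentially the paper's estimate of its remainder term $F_{22}^m$, repackaged: you bound $|u_{\ell,\sigma}^k-u_{\ell,K}^k|$ by the neighboring jump (with $\mathrm{D}_\sigma u_0^k\le\sum_{j=1}^n\mathrm{D}_\sigma u_j^k$ for $\ell=0$), use $\m(T_{K,\sigma})=\tfrac12\m(\sigma)\dist_\sigma\le\tfrac12\tau_\sigma(\Delta x_m)^2$ and the uniform discrete $H^1$ bound, and invoke the Lipschitz continuity of $A_\sigma$ from (H5); this correctly yields $A_{ij,\sigma}(u_\sigma^m)\to A_{ij}(u)$ strongly in $L^p(\Omega_T)$.

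The gap is in the limit passage for the diffusion term. First, your recasting is not an identity: with the paper's definition of $\na^{\mathcal{D}}$ and the relation $\m(\sigma)\dist(x_K,x_L)=2\m(T_{K,\sigma})$, one computes
$\int_{T_{K,\sigma}}\na^m u_{j,m}\cdot\na^m\psi_i\,dx
=2\,\tau_\sigma\,\mathrm{D}_{K,\sigma}u_j^k\,\mathrm{D}_{K,\sigma}\psi_i^{k-1}$,
so the edge sum equals one \emph{half} of your diamond integral. Second, and more seriously, the claim that $\na^m\psi_i\to\na\psi_i$ strongly in $L^2(\Omega_T)$ is false for this two-point reconstruction: on each diamond, $\na^m\psi_i\approx 2(\na\psi_i\cdot\nu_{K,\sigma})\nu_{K,\sigma}$, which oscillates with the edge orientation; its weak limit is $\na\psi_i$, but $\|\na^m\psi_i\|_{L^2}^2\to 2\|\na\psi_i\|_{L^2}^2$ in two dimensions, so the convergence cannot be strong. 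Your ``weak times strong'' pairing is therefore weak times weak and does not pass to the limit. (The two defects happen to compensate exactly---the edge sum is half of a quantity whose limit is twice the desired one---but your argument neither notices nor justifies this, so as written the step fails.) The repair is the paper's route, following \cite{CLP03}: keep the \emph{exact} gradient of the test function, i.e., compare $F_2^m$ with $\int_0^T\int_\Omega A_{ij}(u_m)\,\na^m u_{j,m}\cdot\na\psi_i\,dx\,dt$, using the consistency estimate that $\mathrm{D}_{K,\sigma}\psi_i^{k-1}/\dist_\sigma$ differs from the diamond average of $\na\psi_i\cdot\nu_{K,\sigma}$ by $O(\eta_m)$. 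Then only one discrete gradient appears, and the weak convergence of $\na^m u_{j,m}$ against the fixed smooth field $\na\psi_i$, combined with your strong convergence of $A_{ij,\sigma}(u_\sigma^m)$, closes the proof.
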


The proof is based on uniform estimates deduced from the entropy inequality
\eqref{2.ei} and the compactness result from \cite{GaLa12}, giving a.e.\
convergence of a subsequence of $(u_m)$. We follow the strategy of
\cite{CLP03} to show that the limit satisfies \eqref{1.eq} in the weak sense 
\eqref{2.weak}.


\section{Examples and counter-example}\label{sec.exam}

\subsection{Three-species Maxwell--Stefan equations}\label{ssec.MaxwellStefan}

The Maxwell--Stefan equations describe the evolution of the partial
densities in a multicomponent fluid,
with diffusion fluxes originating from friction forces.
The diffusion matrix of the Fock--Onsager formulation is defined for the 
three-species case by
\begin{align*}
  & A(u) = \frac{1}{\alpha(u)}\begin{pmatrix}
	d_2 + (d_0-d_2)u_1 & (d_0-d_1)u_1 \\
	(d_0-d_2)u_2 & d_1 + (d_0-d_1)u_2
	\end{pmatrix}, \\
  & \mbox{where }\alpha(u) = d_1d_2(1-u_1-u_2) + d_0d_1u_1 + d_0d_2u_2,
\end{align*}
where $u_1$, $u_2$ are the volume fractions of the components of the fluid mixture
and $d_i$ are some positive parameters \cite{Jue16}. The third constitutent is
given by $u_0=1-u_1-u_2$. We introduce the entropy density 
\begin{align}\label{3.density.Boltz}
  h(u) = u_1(\log u_1-1) + u_2(\log u_2-1) + u_0(\log u_0-1) + 3, \quad u\in D.
\end{align}
Then the Hessian of $h$ equals
$$
  h''(u) = \begin{pmatrix} 1/u_1 + 1/u_0 & 1/u_0 \\
	1/u_0 & 1/u_2 + 1/u_0 \end{pmatrix}.
$$
Now, for the finite-volume scheme, let $(u_{1,K},\ldots,u_{n,K}) \in D$ be given 
for every $K \in \T$, we define for all $\sigma = K|L \in \E_{int}$ and 
$i=0,\ldots,n$,
\begin{equation}\label{3.usigma}
  u_{i,\sigma} = \begin{cases}
  \dfrac{u_{i,K}-u_{i,L}}{\log(u_{i,K})-\log(u_{i,L})}
	\quad & \mbox{if } u_{i,K} > 0, \ u_{i,L} > 0, \mbox{ and } 
	u_{i,K} \neq u_{i,L}, \\
  u_{i,K} & \mbox{if } u_{i,K} = u_{i,L} > 0, \\
  0 & \mbox{else},
  \end{cases}   
\end{equation}
and
\begin{align*}
  & H(u_\sigma) = \begin{pmatrix} 
	1/u_{1,\sigma} + 1/u_{0,\sigma} & 1/u_{0,\sigma} \\
	1/u_{0,\sigma} & 1/u_{2,\sigma} + 1/u_{0,\sigma} \end{pmatrix}, \\
	& A_\sigma(u_\sigma) = \frac{1}{\alpha_\sigma(u_\sigma)}\begin{pmatrix}
	d_2(u_{2,\sigma}+u_{0,\sigma}) + d_0u_{1,\sigma} & (d_0-d_1)u_{1,\sigma} \\
	(d_0-d_2)u_{2,\sigma} & d_1(u_{1,\sigma}+u_{0,\sigma}) + d_0u_{2,\sigma}
	\end{pmatrix}, \\ 
	&\mbox{where }\alpha_\sigma(u_\sigma) = d_1d_2u_{0,\sigma} + d_0d_1u_{1,\sigma}
	+ d_0d_2u_{2,\sigma}.
\end{align*}
Note that $A_\sigma=A$ if $u_{0,\sigma}=1-u_{1,\sigma}-u_{2,\sigma}$.
We compute for $z=(z_1,z_2)\in\R^2$,
\begin{align*}
  H(u_\sigma)A_\sigma(u_\sigma) 
	&= \frac{u_{0,\sigma}+u_{1,\sigma}+u_{2,\sigma}}{\alpha_\sigma(u_\sigma)}
	\begin{pmatrix}
	d_2/u_{1,\sigma} + d_0/u_{0,\sigma} & d_0/u_{0,\sigma} \\
	d_0/u_{0,\sigma} & d_1/u_{2,\sigma} + 1/u_{0,\sigma}
	\end{pmatrix}, \\
  z^\top H(u_\sigma)A_\sigma(u_\sigma)z 
	&= \frac{u_{0,\sigma}+u_{1,\sigma}+u_{2,\sigma}}{\alpha_\sigma(u_\sigma)}\bigg(
	\frac{d_2}{u_{1,\sigma}}z_1^2
	+ \frac{d_1}{u_{2,\sigma}}z_2^2 + \frac{d_0}{u_{0,\sigma}}(z_1+z_2)^2\bigg) \\
	&\ge c\bigg(\frac{z_1^2}{u_{1,\sigma}} + \frac{z_2^2}{u_{2,\sigma}}\bigg),
\end{align*}
where $c=\max\{d_0d_1,d_0d_1,d_1d_2\}>0$. This fulfills Hypothesis (H5) with
$s=1/2$. Moreover, Theorem \ref{thm.ex} shows that $u_{i,\sigma}>0$ for
$i=1,2$. Thus, $\alpha_\sigma(u_\sigma)$ is positive and $A_\sigma$
is well defined.


\subsection{A cross-diffusion system for thin-film solar cells}\label{sec.exam.thinfilm}

The physical vapor deposition process for the fabrication of thin-film crystalline 
solar cells can be described by the following cross-diffusion equations:
\begin{align}\label{2.thinfilm}
  \pa_t u_i = \diver\bigg(\sum_{j=0}^n a_{ij}(u_j\na u_i-u_i\na u_j)\bigg),
	\quad i=0,\ldots,n,
\end{align}
where $u_i$ are the volume fractions of the components of the thin film
and $a_{ij}>0$ satisfies $a_{ij}=a_{ji}$ for all $i,j=1,\ldots,n$. Since 
$\sum_{i=0}^n u_i=1$, we can remove, as in \cite{BaEh18}, 
the equation for the species $i=0$,
leading to equations \eqref{1.eq} with the diffusion matrix $A(u)=(A_{ij}(u))$, 
where
\begin{align}\label{2.diff.matrix.thinfilm}
  A_{ii}(u) = \sum_{k=1,\,k\neq i}^n(a_{ik}-a_{i0})u_k + a_{i0}, \quad
	A_{ij}(u) = -(a_{ij}-a_{i0})u_i\quad\mbox{for }j\neq i,
\end{align}
and $i,j=1,\ldots,n$. In this case, we consider the entropy density 
\eqref{3.density.Boltz} and for $(u_{1,K},\ldots,u_{n,K}) \in D$ given for every 
$K \in \T$, we define for all $\sigma = K|L \in \E_{int}$ and $i=0,\ldots,n$ 
the coefficient $u_{i,\sigma}$ as in \eqref{3.usigma}. Then we choose the 
following matrices:
\begin{align}
  & H_{ii}(u_\sigma) = \frac{1}{u_{i,\sigma}} +  \frac{1}{u_{0,\sigma}}, \quad
	H_{ij}(u_\sigma) =  \frac{1}{u_{0,\sigma}}\quad\mbox{for }j\neq i, \label{2.Hsigma} \\
  & A_{ii,\sigma}(u_\sigma) = \sum_{k=1,\,k\neq i}^n(a_{ik}-a_{i0})u_{k,\sigma} 
	+ a_{i0}, \quad
	A_{ij,\sigma}(u_\sigma) = -(a_{ij}-a_{i0})u_{i,\sigma}\quad\mbox{for }j\neq i.
	\label{2.Asigma}
\end{align}

We claim that Hypothesis (H5) holds with $s=1/2$.
The proof follows the strategy in \cite[Section 3.1]{BaEh18}, but since
generally $\beta\neq 1$, we need to modify slightly the arguments.
To this end, we introduce the matrix $P(u_\sigma)$ with elements 
$P_{ij}(u_\sigma)=\delta_{ij}-u_{i,\sigma}$ for $i,j=1,\ldots,n$. 
A computation shows that
$$
  (H(u_\sigma)P(u_\sigma))_{ii} 
	= \frac{1}{u_{i,\sigma}} + \frac{1-\beta}{u_{0,\sigma}}, \quad
	(H(u_\sigma)P(u_\sigma))_{ij} 
  = \frac{1-\beta}{u_{0,\sigma}}\quad\mbox{for }i\neq j.
$$
Recall that $\beta\le 1$. Then $H(u_\sigma)P(u_\sigma)$ is positive definite with
$$
  z^\top H(u_\sigma)P(u_\sigma)z 
	= \sum_{i=1}^n\frac{z_i^2}{u_{i,\sigma}} 
	+ \frac{1-\beta}{u_{0,\sigma}}\sum_{i,j=1}^n(z_i+z_j)^2
	\ge \sum_{i=1}^n\frac{z_i^2}{u_{i,\sigma}}
$$
for any $z\in\R^n$.
We also need the matrix $\Lambda(u_\sigma)$ with elements 
$\Lambda_{ij}(u_\sigma)=\delta_{ij}/u_{i,\sigma}$ and
$\alpha=\min_{i,j=1,\ldots,n}a_{ij}>0$. The previous inequality gives
$$
  z^\top \big(H(u_\sigma)A_\sigma(u_\sigma)-\alpha\Lambda(u_\sigma)\big)z
	\ge z^\top H(u_\sigma)\big(A_\sigma(u_\sigma)-\alpha P(u_\sigma)\big)z.
$$
Denoting by $\widetilde A_\sigma(u_\sigma)$ the matrix with coefficients
$a_{ij}-\alpha$ instead of $a_{ij}$ and introducing the matrix $D(u_\sigma)$ 
with elements $D_{ij}(u_\sigma)=u_{i,\sigma}$ for $i,j=1,\ldots,n$, it follows that
$A_\sigma(u_\sigma)-\alpha P(u_\sigma)
=\widetilde A_\sigma(u_\sigma)+\alpha D(u_\sigma)$. Therefore,
$$
  z^\top \big(H(u_\sigma)A_\sigma(u_\sigma)-\alpha\Lambda(u_\sigma)\big)z
	\ge z^\top\big(H(u_\sigma)\widetilde A_\sigma(u_\sigma)
	+ \alpha H(u_\sigma)D(u_\sigma)\big)z.
$$

It remains to show that $H(u_\sigma)\widetilde A_\sigma(u_\sigma)$
and $H(u_\sigma)D(u_\sigma)$ are positive semidefinite.
All elements of $H(u_\sigma)D(u_\sigma)$ are given by the same value
$\beta/u_{0,\sigma}$, and so this matrix is positive semidefinite. 
Furthermore, $H(u_\sigma)\widetilde A_\sigma(u_\sigma)$
is positive semidefinite if and only if 
$\widetilde A_\sigma(u_\sigma)H(u_\sigma)^{-1}$ is positive semidefinite.
(At this point, we use the symmetry of $H(u_\sigma)$.)  
Since the elements of the inverse $H(u_\sigma)^{-1}$ are 
$$
  H(u_\sigma)^{-1}_{ii} = \frac{1}{\beta}(\beta-u_{i,\sigma})u_{i,\sigma}, \quad
	H(u_\sigma)^{-1}_{ij} = -\frac{1}{\beta}u_{i,\sigma}u_{j,\sigma}
	\quad\mbox{for }i\neq j
$$
and $i,j=1,\ldots,n$, we obtain
\begin{align*}
  \big(\widetilde A_\sigma(u_\sigma)H(u_\sigma)^{-1}\big)_{ii}
	&= (a_{i0}-\alpha)\frac{u_{i,\sigma}}{\beta}(\beta-u_{i,\sigma})
	+ \frac{1}{\beta}\sum_{k=1,\,j\neq i}^n(a_{ij}-\alpha)u_{i,\sigma}u_{j,\sigma}, \\
	\big(\widetilde A_\sigma(u_\sigma)H(u_\sigma)^{-1}\big)_{ij}
	&= -\frac{1}{\beta}(a_{ij}-\alpha)u_{i,\sigma}u_{j,\sigma}\quad\mbox{for }i\neq j.
\end{align*}
Consequently, using the symmetry of $(a_{ij})$,
\begin{align*}
  z^\top &\widetilde A_\sigma(u_\sigma)H(u_\sigma)^{-1}z \\
  &= \frac{1}{\beta}\sum_{i=1}^n(a_{i0}-\alpha)u_{i,\sigma}(\beta-u_{i,\sigma})z_i^2 
	+ \frac{1}{\beta}\sum_{i=1}^n\sum_{j=1,\,j\neq i}^n(a_{ij}-\alpha)
	u_{i,\sigma}u_{j,\sigma}(z_i^2 - z_iz_j) \\
	&= \frac{1}{\beta}\sum_{i=1}^n(a_{i0}-\alpha)u_{i,\sigma}(\beta-u_{i,\sigma})z_i^2
	+ \frac{1}{2\beta}\sum_{i\neq j}(a_{ij}-\alpha)
	u_{i,\sigma}u_{j,\sigma}(z_i^2+z_j^2 - 2z_iz_j) \ge 0.
\end{align*}
We conclude that
$$
  z^\top \big(H(u_\sigma)A_\sigma(u_\sigma)-\alpha\Lambda(u_\sigma)\big)z	\ge 0.
$$
Summarizing, the result reads as follows.

\begin{lemma}\label{lem.thinfilm}
et $u_\sigma \in (0,1)^{n+1}$ be defined by \eqref{3.usigma} and $H(u_\sigma)$ and 
$A_\sigma(u_\sigma)$ by \eqref{2.Hsigma}--\eqref{2.Asigma} and assume that 
$a_{ij}=a_{ji}$ for all
$i,j=1,\ldots,n$ and $\alpha=\min_{i,j=1,\ldots,n}a_{ij}>0$. 
Then for any $z\in\R^n$,
$$
  z^\top H(u_\sigma)A_\sigma(u_\sigma)z
	\ge \alpha\sum_{i=1}^n\frac{z_i^2}{u_{i,\sigma}}.
$$
\end{lemma}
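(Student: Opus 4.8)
The plan is to prove the equivalent statement that the matrix $H(u_\sigma)A_\sigma(u_\sigma)-\alpha\Lambda(u_\sigma)$ is positive semidefinite, where $\Lambda(u_\sigma)=\mathrm{diag}(1/u_{1,\sigma},\ldots,1/u_{n,\sigma})$, since the asserted lower bound is exactly $z^\top\Lambda(u_\sigma)z=\sum_{i=1}^n z_i^2/u_{i,\sigma}$. The key structural idea is to absorb the comparison matrix $\alpha\Lambda$ into $\alpha H(u_\sigma)P(u_\sigma)$, where $P_{ij}(u_\sigma)=\delta_{ij}-u_{i,\sigma}$. First I would compute $H(u_\sigma)P(u_\sigma)$ directly from \eqref{2.Hsigma}; writing $\beta=\sum_{i=0}^n u_{i,\sigma}$, a short calculation yields diagonal entries $1/u_{i,\sigma}+(1-\beta)/u_{0,\sigma}$ and off-diagonal entries $(1-\beta)/u_{0,\sigma}$, so that $z^\top H(u_\sigma)P(u_\sigma)z=\sum_{i=1}^n z_i^2/u_{i,\sigma}+\frac{1-\beta}{u_{0,\sigma}}\big(\sum_{i=1}^n z_i\big)^2$. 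Because the logarithmic mean is dominated by the arithmetic mean, one has $\beta\le1$, whence the last term is nonnegative and $z^\top H(u_\sigma)P(u_\sigma)z\ge z^\top\Lambda(u_\sigma)z$. Consequently $z^\top(HA_\sigma-\alpha\Lambda)z\ge z^\top H(A_\sigma-\alpha P)z$, and it remains to prove that $H(u_\sigma)(A_\sigma(u_\sigma)-\alpha P(u_\sigma))$ is positive semidefinite.

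Next I would split the shifted matrix algebraically. Let $\widetilde A_\sigma$ denote the matrix obtained from \eqref{2.Asigma} by replacing each coefficient $a_{ij}$ by $a_{ij}-\alpha$, and let $D(u_\sigma)$ have all entries of its $i$-th row equal to $u_{i,\sigma}$. An entrywise check gives $A_\sigma(u_\sigma)-\alpha P(u_\sigma)=\widetilde A_\sigma(u_\sigma)+\alpha D(u_\sigma)$, so the claim reduces to the positive semidefiniteness of $H\widetilde A_\sigma$ and of $HD$. The factor $HD$ is harmless: multiplying $H(u_\sigma)$ by $D(u_\sigma)$ collapses, again via $\beta=\sum_{i=0}^n u_{i,\sigma}$, to a matrix all of whose entries equal $\beta/u_{0,\sigma}>0$, i.e.\ a nonnegative multiple of $\mathbf{1}\mathbf{1}^\top$, which is positive semidefinite.

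The main obstacle is the positive semidefiniteness of $H(u_\sigma)\widetilde A_\sigma(u_\sigma)$, and here I would use the symmetry of $H(u_\sigma)$. Since $z\mapsto H(u_\sigma)z$ is a linear bijection and $H=H^\top$, the substitution $y=H(u_\sigma)z$ shows that $z^\top H\widetilde A_\sigma z\ge0$ for all $z$ if and only if $y^\top\widetilde A_\sigma H^{-1}y\ge0$ for all $y$. The inverse has the closed form (by Sherman--Morrison applied to the diagonal-plus-rank-one structure \eqref{2.Hsigma}) $H(u_\sigma)^{-1}_{ii}=u_{i,\sigma}(\beta-u_{i,\sigma})/\beta$ and $H(u_\sigma)^{-1}_{ij}=-u_{i,\sigma}u_{j,\sigma}/\beta$. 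Inserting this together with the entries of $\widetilde A_\sigma$ and symmetrizing the off-diagonal double sum through $a_{ij}=a_{ji}$, the quadratic form should rearrange as
$$
  y^\top\widetilde A_\sigma(u_\sigma)H(u_\sigma)^{-1}y
  = \frac{1}{\beta}\sum_{i=1}^n(a_{i0}-\alpha)u_{i,\sigma}(\beta-u_{i,\sigma})y_i^2
  + \frac{1}{2\beta}\sum_{i\neq j}(a_{ij}-\alpha)u_{i,\sigma}u_{j,\sigma}(y_i-y_j)^2,
$$
all of whose summands are nonnegative because $a_{ij}\ge\alpha$ and $a_{i0}\ge\alpha$ by definition of $\alpha=\min_{i,j}a_{ij}$, because $u_{i,\sigma}>0$, and because $\beta-u_{i,\sigma}=\sum_{k\neq i}u_{k,\sigma}\ge0$. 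The delicate point, which I expect to be the least forgiving part of the bookkeeping, is obtaining the perfect squares $(y_i-y_j)^2$: the diagonal contribution of $\widetilde A_\sigma H^{-1}$ must furnish exactly the $y_i^2+y_j^2$ that completes each square built from the off-diagonal $-2y_iy_j$ term, which works precisely because of the sign pattern inherited from $P$ together with $a_{ij}=a_{ji}$. Granting this, $H\widetilde A_\sigma$ is positive semidefinite; combining the three contributions then gives $z^\top(HA_\sigma-\alpha\Lambda)z\ge0$, which is the assertion.
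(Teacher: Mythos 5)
Your proposal is correct and follows essentially the same route as the paper's own proof: the comparison $z^\top H(u_\sigma)P(u_\sigma)z\ge z^\top\Lambda(u_\sigma)z$ using $\beta=\sum_{i=0}^n u_{i,\sigma}\le 1$, the splitting $A_\sigma-\alpha P=\widetilde A_\sigma+\alpha D$, the rank-one observation for $H D$, and the passage to $\widetilde A_\sigma H^{-1}$ via the symmetry of $H$ with the same closed-form inverse and the same completion of squares. The only (shared) caveat is that the final step needs $a_{i0}\ge\alpha$, which the stated definition $\alpha=\min_{i,j=1,\ldots,n}a_{ij}$ does not literally guarantee; the paper's argument relies on this point exactly as yours does.
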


Already in \cite{CaGa20}, a convergent two-point flux approximation 
finite-volume scheme for this model with a logarithmic mean was introduced,
but with a different strategy. Indeed, the authors of \cite{CaGa20} noticed that if
all diffusion coefficients are equal, system \eqref{2.thinfilm} reduces to
$n+1$ uncoupled heat equations, and they rewrite the equations as
$$
  \pa_t u_i - a^* \Delta u_i = \diver\bigg(\sum_{j=0}^n (a_{ij}-a^*)
  (u_j\na u_i-u_i\na u_j)\bigg), \quad i=0,\ldots,n,
$$
where $a^*>0$ is arbitrary. Then they designed their scheme for this equivalent
system and proved its convergence, using similar techniques as in our paper.
However, the numerical results depend on the choice of $a^*$, and choosing 
$a^*>0$ too large overestimates the diffusion. In our approach, we avoid
the artificial parameter $a^*$ but still obtain full structure preservation
of the scheme.


\subsection{Tumor-growth model}\label{sec.tumor}

The growth of an avascular tumor can be modeled in the framework of fluid
dynamics and continuum mechanics by diffusion fluxes of the tumor cells,
the extracellular matrix (ECM), and the interstitial fluid (water, nutrients).
The diffusion matrix of the tumor-growth model of \cite{JaBy02}
is given by
$$
A(u) = \begin{pmatrix}
2u_1(1-u_1) - \beta\theta u_1u_2^2 & -2\beta u_1u_2(1+\theta u_1) \\
-2u_1u_2 + \beta\theta(1-u_2)u_2^2 & 2\beta u_2(1-u_2)(1+\theta u_1)
\end{pmatrix},
$$
where $u_1$ is the volume fraction of the tumor cells and $u_2$ is the volume
fraction of the ECM. The volume fraction of the interstitial fluid is denoted
by $u_0$, and it holds that $u_0+u_1+u_2=1$.
The entropy density and the mobility coefficients are defined as in the 
previous examples, and we choose
$H(u_\sigma)$ as before. Furthermore, we define
$$
A_\sigma(u_\sigma) = \frac{1}{a(u_\sigma)}\begin{pmatrix}
2u_{1,\sigma}(u_{0,\sigma}+u_{2,\sigma}) - \beta\theta u_{1,\sigma}u_{2,\sigma}^2 
& -2\beta u_{1,\sigma}u_{2,\sigma}(1+\theta u_{1,\sigma}) \\
-2u_{1,\sigma}u_{2,\sigma} + \beta\theta(u_{0,\sigma}+u_{1,\sigma})u_{2,\sigma}^2 
& 2\beta u_{2,\sigma}(u_{0,\sigma}+u_{1,\sigma})(1+\theta u_{1,\sigma})
\end{pmatrix},
$$
where $a(u_\sigma)=u_{0,\sigma}+u_{1,\sigma}+u_{2,\sigma}$ is a correction factor.
The corresponding cross-diffusion system has an entropy structure under
the condition $\theta<4/\sqrt{\beta}$ \cite{JuSt12}. We compute
$$
z^\top H(u_\sigma)A_\sigma(u_\sigma)z
= 2z_1^2 + \beta\theta u_{2,\sigma}z_1z_2 + 2\beta(1+\theta u_{1,\sigma})z_2^2
\ge \delta(z_1^2+z_2^2),
$$
where $\delta>0$ depends on $\beta$ and $\theta$. This does not
fulfill Hypothesis (H5) since $s=1$. Moreover, we cannot deduce that $a(u_\sigma)>0$ from Theorem \ref{thm.ex}.
For instance, if $\sigma=K|L$ and $u_{1,K}=u_{2,L}=0$, $u_{1,L}=u_{2,K}=1$,
we obtain $u_{0,K}=u_{0,L}=0$ and $u_{i,\sigma}=0$ for all $i=0,1,2$. 
In fact, we observe in our numerical simulations that $a(u_\sigma)$ 
may vanish. This can be prevented by adding an artificial diffusion term of the 
form $\delta\Delta u_i$ with $\delta>0$ for $i=1,2$. Then Hypothesis (H5) is
satisfied with $s=1/2$. However, such terms regularize the solutions in such a way
that the ``spikes'' observed in  \cite[Figure 1]{JuSt12} are smoothed out.
Thus, the accurate numerical simulation in the case $s=1$ is still an open 
problem.


\section{Proof of Theorem \ref{thm.ex}}\label{sec.ex}

We first prove a discrete version of the chain rule $h''(u)\na u=\na h'(u)$.

\begin{lemma}[Discrete chain rule]\label{lem.chain}
Let $H_{ij}(u_\sigma)$ be defined by \eqref{2.H}, where $u\in D$ and $u_\sigma$ is given by \eqref{2.usigma}. Then for all $\sigma\in\Eint$, it holds that
$$
  H(u_\sigma)\mathrm{D}_{K,\sigma}u^k = \mathrm{D}_{K,\sigma}h'(u^k).
$$
\end{lemma}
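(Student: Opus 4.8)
The plan is to prove the identity componentwise and reduce it to the scalar chain rule \eqref{2.chain}, which already holds by construction of the edge values $\widetilde u_{i,\sigma}^k$. Writing out the $i$-th component of the left-hand side using the definition \eqref{2.H} of $H$, I would compute
\begin{align*}
  \big(H(u_\sigma)\mathrm{D}_{K,\sigma}u^k\big)_i
  &= \sum_{j=1}^n H_{ij}(u_\sigma)\,\mathrm{D}_{K,\sigma}u_j^k \\
  &= h_i''(u_{i,\sigma})\,\mathrm{D}_{K,\sigma}u_i^k
     + h_0''(u_{0,\sigma})\sum_{j=1}^n \mathrm{D}_{K,\sigma}u_j^k,
\end{align*}
where the first term comes from the diagonal Kronecker piece $\delta_{ij}h_i''$ and the second from the common summand $h_0''(u_{0,\sigma})$.

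The key step is to recognize the sum $\sum_{j=1}^n \mathrm{D}_{K,\sigma}u_j^k$ as the discrete increment of the solvent fraction, with the correct sign. Since $u_{0,K}^k = 1-\sum_{j=1}^n u_{j,K}^k$ on each cell (our \emph{first idea}), linearity of the difference operator $\mathrm{D}_{K,\sigma}$ gives $\mathrm{D}_{K,\sigma}u_0^k = -\sum_{j=1}^n \mathrm{D}_{K,\sigma}u_j^k$. Substituting this yields
$$
  \big(H(u_\sigma)\mathrm{D}_{K,\sigma}u^k\big)_i
  = h_i''(u_{i,\sigma})\,\mathrm{D}_{K,\sigma}u_i^k
    - h_0''(u_{0,\sigma})\,\mathrm{D}_{K,\sigma}u_0^k.
$$

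Now each of the two remaining terms is exactly a scalar chain rule of the form \eqref{2.chain}, applied with index $i$ and with index $0$ respectively. Invoking \eqref{2.chain} for $i$ gives $h_i''(u_{i,\sigma})\mathrm{D}_{K,\sigma}u_i^k = \mathrm{D}_{K,\sigma}h_i'(u_i^k)$, and for index $0$ it gives $h_0''(u_{0,\sigma})\mathrm{D}_{K,\sigma}u_0^k = \mathrm{D}_{K,\sigma}h_0'(u_0^k)$, so the expression collapses to $\mathrm{D}_{K,\sigma}h_i'(u_i^k) - \mathrm{D}_{K,\sigma}h_0'(u_0^k)$. Finally I would identify this with the $i$-th component of $\mathrm{D}_{K,\sigma}h'(u^k)$: since $h(u)=\sum_{j=1}^n h_j(u_j)+h_0(u_0)$ with $u_0$ a function of $(u_1,\dots,u_n)$, the chain rule on the continuous level gives $(h'(u))_i = h_i'(u_i) - h_0'(u_0)$ because $\partial u_0/\partial u_i = -1$, and applying $\mathrm{D}_{K,\sigma}$ to this matches exactly. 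This completes the proof.

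The argument is essentially bookkeeping once the structure is set up; the main point to handle carefully is the validity of the scalar chain rule \eqref{2.chain} across all branches of the piecewise definition \eqref{2.usigma}. When $u_{i,K}^k\neq u_{i,L}^k$ with both positive, \eqref{2.chain} holds by the mean-value theorem defining $\widetilde u_{i,\sigma}^k$; when $u_{i,K}^k=u_{i,L}^k$ the increment $\mathrm{D}_{K,\sigma}u_i^k$ vanishes and both sides are zero; and in the degenerate ``else'' case $u_{i,\sigma}^k=0$ one must check the statement remains consistent (e.g.\ that the corresponding difference of $h_i'$ is handled by convention). The only genuine subtlety is thus verifying that the scalar identity, and hence its use for the solvent index $0$, persists through these cases rather than any difficulty in the vector-valued manipulation itself.
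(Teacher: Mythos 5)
Your proposal is correct and follows essentially the same route as the paper's own proof: componentwise expansion of $H$, conversion of $\sum_{j=1}^n \mathrm{D}_{K,\sigma}u_j^k$ into $-\mathrm{D}_{K,\sigma}u_0^k$ via the cell-level constraint $u_{0,K}^k=1-\sum_{j=1}^n u_{j,K}^k$, two applications of the scalar chain rule \eqref{2.chain} (indices $i$ and $0$), and the identification $(h'(u))_i=h_i'(u_i)-h_0'(u_0)$. The one loose end you flag --- the ``else'' branch of \eqref{2.usigma} --- is resolved immediately by the hypothesis $u\in D$, which forces $u_{i,K}^k,u_{i,L}^k>0$ for all $i=0,\ldots,n$, so that branch never occurs; this is exactly how the paper dismisses it.
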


\begin{proof}
Let $\sigma=K|L\in\Eint$ and $i\in\{1,\ldots,n\}$. By definition \eqref{2.usigma}, 
we find that
\begin{align*}
  \sum_{j=1}^n H_{ij}(u_\sigma^k)\textrm{D}_{K,\sigma}u_j^k 
	&= h_i''(u_{i,\sigma})(u_{i,L}-u_{i,K})
	+ h_0''(u_{0,\sigma})\sum_{j=1}^n(u_{j,L}-u_{j,K}) \\
	&= h_i''(u_{i,\sigma})(u_{i,L}-u_{i,K})
	- h_0''(u_\sigma)(u_{0,L}-u_{0,K}),
\end{align*}
using $\sum_{j=1}^n u_{i,K}=1-u_{0,K}$ in the last step.
Since $u\in D$, we have either $u_{i,\sigma}=\widetilde u_{i,\sigma}$ or $u_{i,\sigma}=u_{i,K}=u_{i,L}$.
Therefore, 
\begin{align*}
  \sum_{j=1}^n H_{ij}(u_\sigma^k)\textrm{D}_{K,\sigma}u_j^k
  &= h_i'(u_{i,L})-h_i'(u_{i,K}) - \big(h_0'(u_{0,L})-h_0'(u_{i,K})\big) \\
	&= \textrm{D}_{K,\sigma}h_i'(u^k) - \textrm{D}_{K,\sigma}h_0'(u^k)
	= \textrm{D}_{K,\sigma}(\pa h/\pa u_i)(u^k).
\end{align*}
The result also holds when $\sigma\in\E_{{\rm ext},K}$, since then
$\textrm{D}_{K,\sigma}u_j^k=0$. This finishes the proof. 
\end{proof}

The existence proof is similar to \cite[Section 2]{JuZu20} 
and we repeat only the main arguments. 
The proof of the positivity statements, however, is new.

{\em Step 1: Fixed-point problem.}
We proceed by induction over $k\in{\mathbb N}\cup\{0\}$.
If $k=0$, we have $u_{K}^0\in D$ for $K\in\T$ by Hypothesis (H3).
Let $u^{k-1}$ be given such that $u_K^{k-1}\in D$ for $K\in\T$. 
We construct $u^k$ from a fixed-point argument. For this, 
let $R>0$, $\eps>0$ and set
$$
  Z_R = \big\{w=(w_1,\ldots,w_n)\in V_\T^n:\|w_i\|_{1,2,\T}<R\mbox{ for }
	i=1,\ldots,n\big\}.
$$
We define the mapping $F_\eps:Z_R\to\R^{n\theta}$, $F_\eps(w)=w^\eps$, where
$\theta=\#\T$ and $w^\eps=(w_1^\eps,\ldots,w_n^\eps)$ solves the linear system
\begin{equation}\label{3.lin}
  \eps\sum_{\sigma\in\E_K}\tau_\sigma \text{D}_{K,\sigma}w_i^\eps
	- \eps\m(K) w^\eps_{i,K}
  = \frac{\m(K)}{\Delta t}(u_{i,K}-u_{i,K}^{k-1})
	+ \sum_{\sigma\in\E_K}\mathcal{F}_{i,K,\sigma},
\end{equation}
for $K\in\T$, $i=1,\ldots,n$, where $\mathcal{F}_{i,K,\sigma}$ is defined in
\eqref{2.flux} and $u_K=(h')^{-1}(w_K)\in D$. The regularization in $\eps$
is needed, since the diffusion matrix is only positive semidefinite in the
variable $w^\eps$. The existence of a unique solution
$w^\eps$ to \eqref{3.lin} is a consequence of the proof of \cite[Lemma 9.2]{EGH00}.
The continuity of $F_\eps$ is shown as in \cite[Section 4]{JuZu20} by
exploiting the fact that $w\in Z_R$ is bounded and so does $u=(h')^{-1}(w)$,
yielding the estimate $\eps\|w_i^\eps\|_{1,2,\T}\le C(R)$ for some constant
$C(R)>0$. 

We claim that $F_\eps$ admits a fixed point. To this end, we use a topological
degree argument \cite[Chap.~1]{Dei85} and prove that the Brouwer topological
degree satisfies $\operatorname{deg}(I-F_\eps,Z_R,0)=1$. It is sufficient to
verify that any solution $(w^\eps,\rho)\in\overline{Z}_R\times[0,1]$ to the fixed-point
equation $w^\eps=\rho F_\eps(w)$ satisfies $(w_\eps,\rho)\not\in\pa Z_R\times[0,1]$ for
sufficiently large values of $R>0$. Let $(w^\eps,\rho)$ be a fixed point
and $\rho\neq 0$ (the case $\rho=0$ is clear). Then $w^\eps$ solves
\begin{equation}\label{3.approx}
  \eps\sum_{\sigma\in\E_K}\tau_\sigma \text{D}_{K,\sigma}w_i^\eps 
	- \eps\m(K) w^\eps_{i,K}
  = \rho\bigg(\frac{\m(K)}{\Delta t}(u^\eps_{i,K}-u_{i,K}^{k-1})
	+ \sum_{\sigma\in\E_K}\mathcal{F}^\eps_{i,K,\sigma}\bigg),
\end{equation}
for all $K\in\T$ and $i=1,\ldots,n$, where $u^\eps_{K}=(h')^{-1}(w^\eps_{K})$
and $\mathcal{F}_{i,K,\sigma}^\eps$
is defined as in \eqref{2.flux} with $u$ replaced by $u^\eps$.
Because of $u_{i,K}^\eps\in D$ we have 
$u_{i,K}^\eps>0$ for $K\in\T$ and $i=0,\ldots,n$. 

{\em Step 2: Discrete entropy inequality.}
The key step of the proof is the following lemma.

\begin{lemma}[Discrete entropy inequality]
Let the assumptions of Theorem \ref{thm.ex} hold, let $0<\rho\le 1$, $\eps>0$, and
let $u^\eps$ be a solution to \eqref{3.approx}. Then $u_{i,\sigma}^\eps>0$
for all $\sigma\in\E$, $i=0,\ldots,n$ and 
\begin{align}
  \rho {\mathcal H}[u^\eps] 
	&+ \rho c_A\Delta t\sum_{i=1}^n\sum_{\sigma\in\E}\tau_\sigma
	(u_{i,\sigma}^\eps)^{2(s-1)}(\mathrm{D}_\sigma u_i^\eps)^2 +
	\eps\Delta t\sum_{i=1}^n\|w_i^\eps\|_{1,2,\T}^2
	\le \rho{\mathcal H}[u^{k-1}]. \label{3.ei}
\end{align}
\end{lemma}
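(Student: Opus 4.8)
The plan is to test the approximate scheme \eqref{3.approx} with the discrete entropy variable $w_i^\eps = h_i'(u_i^\eps) - h_0'(u_0^\eps)$, i.e. to multiply the $i$-th equation by $\Delta t\, w_{i,K}^\eps$, sum over $i=1,\ldots,n$ and $K\in\T$, and exploit the structure-preserving ingredients assembled earlier. First I would recall that $u_{i,K}^\eps>0$ for all $i=0,\ldots,n$ (noted at the end of Step 1), so the logarithmic/chain-rule means $u_{i,\sigma}^\eps$ are well defined and strictly positive by \eqref{2.usigma} together with the bounds $0\le\min\{u_{i,K},u_{i,L}\}\le u_{i,\sigma}\le\max\{u_{i,K},u_{i,L}\}$ recorded after \eqref{2.chain}; this immediately gives the claimed positivity $u_{i,\sigma}^\eps>0$.

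The three terms of \eqref{3.approx}, after multiplication and summation, are handled separately. For the time-derivative term, I would use the convexity of $h$: since $h$ is convex and $w_K^\eps = h'(u_K^\eps)$, the elementary inequality $h'(a)\cdot(a-b)\ge h(a)-h(b)$ yields
\begin{equation*}
  \sum_{K\in\T}\sum_{i=1}^n \m(K)(u_{i,K}^\eps - u_{i,K}^{k-1})\,w_{i,K}^\eps
  \ge {\mathcal H}[u^\eps] - {\mathcal H}[u^{k-1}],
\end{equation*}
which produces the difference of entropies on the right-hand side of \eqref{3.ei}. For the flux term, I would perform discrete integration by parts (Abel summation) to convert the cell-wise sum $\sum_K\sum_\sigma \mathcal F_{i,K,\sigma}^\eps w_{i,K}^\eps$ into an edge-wise sum $-\sum_{\sigma\in\E}\sum_i \mathcal F_{i,K,\sigma}^\eps\,\mathrm{D}_{K,\sigma}w_i^\eps$, the boundary edges dropping out because $\mathrm{D}_{K,\sigma}u_j^\eps=0$ there. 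The crucial move is then the discrete chain rule of Lemma \ref{lem.chain}, which replaces $\mathrm{D}_{K,\sigma}w_i^\eps = \mathrm{D}_{K,\sigma}(h'(u^\eps))_i$ by $\sum_j H_{ij}(u_\sigma^\eps)\mathrm{D}_{K,\sigma}u_j^\eps$. After this substitution the edge sum becomes $\sum_\sigma\tau_\sigma\,(\mathrm{D}_{K,\sigma}u^\eps)^\top H(u_\sigma^\eps)A_\sigma(u_\sigma^\eps)\,\mathrm{D}_{K,\sigma}u^\eps$, to which the positive-definiteness hypothesis (H5) applies, bounding it below by $c_A\sum_i\tau_\sigma (u_{i,\sigma}^\eps)^{2(s-1)}(\mathrm{D}_\sigma u_i^\eps)^2$. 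Finally, the regularizing term $\eps$ contributes $\eps\sum_i\|w_i^\eps\|_{1,2,\T}^2$ after the same integration by parts applied to $\sum_\sigma\tau_\sigma \mathrm{D}_{K,\sigma}w_i^\eps\cdot\mathrm{D}_{K,\sigma}w_i^\eps$ and the zeroth-order term $\m(K)(w_{i,K}^\eps)^2$; collecting all three terms and multiplying through by the scaling from the $\rho$-weighted equation gives \eqref{3.ei}.

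The main obstacle I expect is the careful bookkeeping in the flux term: one must verify that the componentwise chain rule of Lemma \ref{lem.chain} interacts correctly with the matrix product $H A_\sigma$ rather than with $h''A$, since $u_{0,\sigma}\neq 1-\sum_i u_{i,\sigma}$ in general. This is precisely where the \emph{second idea} of interpreting $A$ as $A_\sigma(u_\sigma)$ is indispensable, because (H5) is stated for the pair $(H,A_\sigma)$ evaluated at the genuine edge vector $u_\sigma=(u_{0,\sigma},\ldots,u_{n,\sigma})$. I would therefore be explicit that the quadratic form assembled on each edge is exactly $z^\top H(u_\sigma^\eps)A_\sigma(u_\sigma^\eps)z$ with $z=\mathrm{D}_{K,\sigma}u^\eps$, so that (H5) is directly applicable. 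A secondary, purely technical point is confirming that the $\rho$ and $\eps$ factors propagate correctly through each of the three terms; since the identity \eqref{3.approx} carries the factor $\rho$ on its entire right-hand side and $\eps$ on its left, the entropy and dissipation terms inherit the factor $\rho$ while the regularization inherits $\eps$, matching \eqref{3.ei} after multiplying by $\Delta t$.
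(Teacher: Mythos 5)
Your proposal is correct and follows essentially the same route as the paper's proof: positivity of the edge values from $u^\eps_K\in D$ together with the mean-value bounds, testing \eqref{3.approx} with $\Delta t\,w^\eps_{i,K}$ and summing, convexity of $h$ for the time term, the discrete chain rule of Lemma \ref{lem.chain} combined with Hypothesis (H5) for the flux term, and the regularization producing $\eps\Delta t\sum_{i=1}^n\|w_i^\eps\|_{1,2,\T}^2$. The only detail you leave implicit is the symmetry of $H(u_\sigma^\eps)$, which is what allows the chain-rule substitution to be rewritten as the quadratic form $z^\top H(u_\sigma^\eps)A_\sigma(u_\sigma^\eps)z$ with $z=\mathrm{D}_{K,\sigma}u^\eps$; since $H$ is symmetric by its definition \eqref{2.H}, this is immediate, and the paper invokes it explicitly at the same point.
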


\begin{proof}
First, we prove that $u_{i,\sigma}^\eps>0$ for all $\sigma\in\E$ and $i=0,\ldots,n$.
Indeed, if $\sigma\in\E_{{\rm ext},K}$, we have $u_{i,K}^\eps=u_{i,K,\sigma}^\eps>0$
and hence $u_{i,\sigma}^\eps=u_{i,K}^\eps>0$. Thus, let $\sigma=K|L\in\Eint$.
Again, if $u_{i,K}^\eps=u_{i,L}^\eps>0$, it follows that $u_{i,\sigma}^\eps>0$.
Otherwise, $u_{i,\sigma}^\eps$ is the unique solution to
$$
  h_i''(u_{i,\sigma}^\eps) = \frac{u_{i,K}^\eps-u_{i,L}^\eps}{h_i'(u_{i,K}^\eps)
	- h_i'(u_{i,L})}>0,
$$
and we deduce from the strict monotonicity that  
$u^\eps_{i,\sigma} \ge \min\{ u^\eps_{i,K}, u^\eps_{i,L} \} >0$.

Next, multiplying \eqref{3.approx} by $\Delta tw_{i,K}^\eps$, summing over
$i=1,\ldots,n$ and $K\in\T$, and applying discrete integration by parts, we find that
\begin{align*}
	0 &= \rho\sum_{i=1}^n\sum_{K\in\T}\m(K)(u_{i,K}^\eps-u_{i,K}^{k-1})w_{i,K}^\eps
	-  \rho \Delta t \sum_{i=1}^n \sum_{\substack{\sigma\in\E_{\rm int} 
	\\ \sigma=K|L}} \mathcal{F}^\eps_{i,K,\sigma} \text{D}_{K,\sigma} w_{i}^\eps \\
	&\phantom{xx}{}+ \eps\Delta t\sum_{i=1}^n \|w_{i}^\eps\|^2_{1,2,\T} 
	= I_1+I_2+I_3.
\end{align*}
We know from Hypothesis (H4) that $h$ is convex such that, because of 
$w_{K}^\eps=h'(u^\eps_K)$,
$$
  I_1 \ge \rho\sum_{K\in\T}\m(K)\big(h(u^\eps_K)-h(u^{k-1}_K)\big)
	= \rho\big({\mathcal H}[u^\eps]-{\mathcal H}[u^{k-1}]\big).
$$
Furthermore, by Lemma \ref{lem.chain}, the symmetry of
$(H_{ij})$, and Hypothesis (H5),
\begin{align*}
  I_2 &= \rho\Delta t\sum_{i,j=1}^n \sum_{\substack{\sigma\in\E_{\rm int} 
	\\ \sigma=K|L}} \tau_\sigma A_{ij}(u_\sigma^\eps)\text{D}_{K,\sigma} u_j^\eps
	\text{D}_{K,\sigma}(h'(u^\eps))_i \\
	&= \rho\Delta t\sum_{i,j=1}^n \sum_{\substack{\sigma\in\E_{\rm int} 
	\\ \sigma=K|L}} \tau_\sigma (H(u_\sigma^\eps)\text{D}_{K,\sigma}u^\eps)_i 
	A_{ij}(u_\sigma^\eps)\text{D}_{K,\sigma} u_j^\eps \\
  &= \rho\Delta t\sum_{i,j,\ell=1}^n \sum_{\substack{\sigma\in\E_{\rm int} 
	\\ \sigma=K|L}} \tau_\sigma\text{D}_{K,\sigma}(u^\eps_\ell)H_{\ell i}(u_\sigma^\eps)
	A_{ij}(u_\sigma^\eps)\text{D}_{K,\sigma}u_j^\eps \\
	&= \rho\Delta t\sum_{j,\ell=1}^n \sum_{\substack{\sigma\in\E_{\rm int} 
	\\ \sigma=K|L}} \tau_\sigma\text{D}_{K,\sigma}u^\eps_\ell \big(H(u_\sigma^\eps)
	A(u_\sigma^\eps)\big)_{\ell j}\text{D}_{K,\sigma}u_j^\eps \\
	&\ge \rho c_A \Delta t\sum_{j=1}^n \sum_{\substack{\sigma\in\E_{\rm int} 
	\\ \sigma=K|L}} \tau_\sigma (u_{j,\sigma}^\eps)^{2(s-1)}
	(\text{D}_{K,\sigma}u_j^\eps)^2.
\end{align*}
Putting these estimates together, we conclude the proof.
\end{proof}

We continue with the topological degree argument. Choosing
$$
  R = \frac{1}{\sqrt{\eps\Delta t}} \mathcal{H}[u^{k-1}]^{1/2}+1,
$$
the previous lemma leads to
$$
  \eps\Delta t\sum_{i=1}^n\|w_i^\eps\|^2_{1,2,\T}
	\le \rho \mathcal{H}[u^{k-1}]
	\le \eps\Delta t(R-1)^2,
$$
which gives $\sum_{i=1}^n\|w_i^\eps\|^2_{1,2,\T}<R^2$.
We conclude that $w^\eps \not\in\pa Z_R$ and $\operatorname{deg}(I-F_\eps,Z_R,0)=1$. 
Thus, $F_\eps$ admits at least one fixed point.

{\em Step 3: Limit $\eps\to 0$.}
By construction of $u^\eps$, we have
$u^\eps_K = (h')^{-1}(w^\eps_K)\in D$ for $K\in\T$. Thus, 
$\|u^\eps_i\|_{0,\infty,\T} := \max_{K \in \T} |u^\eps_{i,K}|\le 1$ for 
$i=0,\ldots,n$, and there exists a subsequence
(not relabeled) such that $u_{i,K}^\eps\to u_{i,K}^k \in [0,1]$ as $\eps\to 0$ 
for $K\in\T$, $i=1,\ldots,n$ and satisfying $u^k_K=(u^k_{1,K},\ldots,u^k_{n,K})
\in \overline{D}$. Moreover, there exists a subsequence such that 
$u^\eps_{i,\sigma} \to u^k_{i,\sigma} \in [0,1]$ as $\eps \to 0$ for 
$\sigma \in \E_{\rm int}$ and $i=0,\ldots,n$, and $u_{0,\sigma}^k$ is given by \eqref{2.usigma}. In view of the bound for 
$\sqrt{\eps}w_i^\eps$, we have, again for a subsequence,
$\eps w_{i,K}^\eps\to 0$.

We show that the total mass $\int_\Omega u_i^k dx$ is positive for $i=1,\ldots,n$. 
For this, we write $w_{i,K}^{\eps,k}:=w_{i,K}^\eps$, sum over $K \in \T$ in \eqref{3.approx}, and use the local balance equations 
$\mathcal{F}^\eps_{i,K,\sigma} + \mathcal{F}^\eps_{i,L,\sigma} = 0$ and 
$\tau_\sigma \mathrm{D}_{K,\sigma} w^\eps 
+ \tau_\sigma \mathrm{D}_{L,\sigma} w^\eps = 0$ for all 
$\sigma =K|L \in \E_{\rm int}$ to deduce that, by induction,
\begin{align*}
  \sum_{K \in \T} \m(K) u^\eps_{i,K} 
	&= \sum_{K \in \T} \m(K) u^{k-1}_{i,K} 
	- \eps \Delta t \sum_{K \in \T} \m(K) w^{\eps,k}_{i,K} \\
  &= \sum_{K \in \T} \m(K) u^0_{i,K} 
	- \eps \Delta t \sum_{j=1}^k\sum_{K \in \T} \m(K) w^{\eps,k}_{i,K},
\end{align*}
for $i=1,\ldots,n$. The limit $\eps\to 0$ yields
\begin{equation}\label{3.eq.mass}
  \sum_{K \in \T} \m(K) u^k_{i,K} 
	= \sum_{K \in \T} \m(K) u^0_{i,K} 
	= \int_\Omega u^0_i(x) dx > 0 \quad \mbox{for }i=1,\ldots,n,
\end{equation}
where the positivity in the last step follows from Hypothesis (H3).

Next, we show that $0 < u^k_{i,\sigma} <1$ for all $\sigma \in \E_{\rm int}$ and 
$i=1,\ldots,n$. In view of definition \eqref{2.usigma} of $u^k_{i,\sigma}$
and $\sum_{i=1}^n u^k_{i,K} \leq 1$, it is
sufficient to show that $u^k_{i,K} > 0$ for all $K \in \T$ and $i=1,\ldots,n$.
Let $i\in\{1,\ldots,n\}$ be fixed. Assume by contradiction that $u^k_{i,K} = 0$ 
for some $K \in \T$. Then $u^k_{i,\sigma} = 0$ for $\sigma=K|L \in \E_{\rm int}$.
The entropy inequality gives
$$
  (u_{i,L}^\eps-u_{i,K}^\eps)^2 
	\le C(\Delta t,u^{k-1})(u_{i,\sigma}^\eps)^{2(1-s)},
$$
and in the limit $\eps\to 0$
\begin{equation*}
  (u_{i,L}^k)^2 = (u_{i,L}^k-u_{i,K}^k)^2 
	\le C(\Delta t,u^{k-1})(u_{i,\sigma}^k)^{2(1-s)}.
\end{equation*}
Thus, $u_{i,\sigma}^k=0$ implies that $u_{i,L}^k=0$ (here we need $s<1$).
Next, let $L'$ be a neighboring cell of $L$. By the previous argument, 
it follows that also $u_{i,L'}^k=0$. Repeating this argument for all cells in $\T$, 
we find that $u_{i,K}^k=0$ for all $K\in\T$. Consequently,  
$\sum_{K\in\T}\m(K)u_{i,K}^k=0$, which contradicts \eqref{3.eq.mass}. 
This shows in particular that $u_{0,K}^k=1-\sum_{i=1}^n u_{i,K}^k < 1$ and
hence $u^k_K \in D$ for all $K \in \T$. Moreover, 
we deduce from the definition of $u_{i,\sigma}^k$ 
that $0 < u^k_{i,\sigma}<1$ holds for all $\sigma=K|L \in \E_{\rm int}$ and 
$i=1,\ldots,n$.

Note that $u_{0,\sigma}^k=0$ may be possible, which explains the condition
$\|A_\sigma(u_\sigma)\|<\infty$ in Hypothesis (H5) whenever $u_{0,\sigma}=0$.
Thus, together with the continuity of $A_{ij,\sigma}$ on $[0,1]\times (0,1)^n$ and
the bounds on $u_{i,\sigma}^k$ for $i=1,\ldots,n$, we can pass to the
limit $\eps\to 0$ in \eqref{3.approx} and \eqref{3.ei} to finish the proof of 
Theorem \ref{thm.ex}.

\begin{remark}[Case $s=1$]\label{rem.s1}\rm
The existence of discrete solutions to scheme \eqref{2.init}--\eqref{2.usigma}
and the validity of the entropy inequality can be extended to the case $s=1$
if $\|A_\sigma(0)\|<\infty$.
This is possible since the singular term $(u_{i,\sigma}^k)^{2(s-1)}$ disappears
when $s=1$ and the proof simplifies. However, we cannot ensure in general that 
$u^k_{i,\sigma}>0$ holds for $\sigma \in \E_{\rm{int}}$, $k \geq 1$, $i=1,\ldots,n$.
For the existence of discrete solutions, the condition $\|A_\sigma(0)\|<\infty$
is crucial. The example in Section \ref{sec.tumor} shows that the modified matrix
$A_\sigma$ may contain the factor $a(u_\sigma)=\sum_{i=1}^n c_iu_{i,\sigma}$ 
for some $c_i>0$, necessary to prove the positive (semi-) definiteness
of $HA_\sigma$. In this situation, $\|A_\sigma(0)\|$ is not a number, 
and the existence of a discrete solution cannot be guaranteed.
\qed
\end{remark}

\begin{remark}[Source terms]\label{rem.source}\rm
Our analysis still holds when we include source terms of the type $f_i(u)$
on the right-hand side of \eqref{1.eq}. We need to assume that 
$f_i\in C^0(\overline{D})$ and that there exist constants $C_f>0$, $c_f\ge 0$
such that for all $u\in D$,
\begin{equation}\label{3.source}
  \sum_{i=1}^n f_i(u) (h_i'(u_i)+h_0'(u_0))\le C_f(1+h(u)) \quad \mbox{and} \quad 
	f_i(u) \geq -c_f u_i \quad \mbox{for }i=1,\ldots,n.
\end{equation}
If we assume, in addition to Hypotheses (H1)--(H5), the  condition $\Delta t<1/C_f$
on the time step size, then the statement of Theorem \ref{thm.ex} holds with
the modified entropy inequality
$$
  (1-C_f\Delta t)\mathcal{H}[u^k] 
	+ c_A\Delta t\sum_{i=1}^n\sum_{\sigma\in\E}\tau_\sigma 
	u_{i,\sigma}^{2(s-1)}(\mathrm{D}_\sigma u_i^k)^2 
	\le \mathcal{H}[u^{k-1}] + C_f\Delta t\m(\Omega).
$$
This inequality is a direct consequence of the first assumption in
\eqref{3.source}; see, e.g., the proof of 
\cite[Theorem 1]{JuZu20}. The second assumption in \eqref{3.source}
allows us to adapt the proof of the positivity of the total mass in Step 3, 
giving after an induction
$$
  \sum_{K \in \T} \m(K) u_{i,K}^k
	\ge \frac{\int_\Omega u_i^{0}(x) dx}{(1+c_f\Delta t)^k} > 0.
$$
The remaining proof is unchanged.
\qed
\end{remark}


\section{Proof of Theorem \ref{thm.conv}}\label{sec.conv}

We prove first some estimates uniform in $\Delta x$ and $\Delta t$ and then
deduce the compactness properties.

\subsection{A priori estimates}

We introduce the discrete time derivative of a function $v\in V_{\T,\Delta t}$:
$$
  \pa_t^{\Delta t} v(x,t) = \pa_t^{\Delta t} v^k(x) 
	= \frac{1}{\Delta t}(v^k(x)-v^{k-1}(x)), \quad 
	(x,t) \in \overline{\Omega} \times (t_{k-1},t_k],\ k=1,\ldots,N_T.
$$

\begin{lemma}[Uniform estimates]\label{lem.est}
Let the assumptions of Theorem \ref{thm.ex} hold.
Then there exists a constant $C>0$ independent of $\Delta x$ and $\Delta t$
such that for all $i=1,\ldots,n$,
$$
  \max_{k=1,\ldots,N_T}\|u_i^k\|_{0,1,\T}
  + \sum_{k=1}^{N_T}\Delta t\|u_i^k\|_{1,2,\T}^2 
	+ \sum_{k=1}^{N_T}\Delta t\|\pa_t^{\Delta t}u_i^k\|_{-1,2,\T}^2 \le C.
$$
\end{lemma}

\begin{proof}
We sum \eqref{2.ei} over $k=1,\ldots,j$ (with $j\le N_T$) and $i=1,\ldots,n$, and use the facts that 
$0<u_{i,\sigma}^k\le 1$ and $s< 1$ to obtain 
$$
  \mathcal{H}[u^j] + c_A\sum_{k=1}^{j}\Delta t\sum_{i=1}^n\sum_{\sigma\in\E}
	\tau_\sigma(\text{D}_\sigma u_i^k)^2
  \le \mathcal{H}[u^0].
$$
Since the entropy dominates the $L^1$ norm thanks to (H4), the previous inequality implies that
$$
  \max_{k=1,\ldots,j}\sum_{i=1}^n\|u_i^k\|_{0,1,\T}
	+ \sum_{k=1}^{j}\Delta t\sum_{i=1}^n|u_i^k|_{1,2,\T}^2\le \mathcal{H}[u^0]+c_h \m(\Omega).
$$
The discrete Poincar\'e--Wirtinger inequality \cite[Theorem 3.6]{BCF15} 
gives the existence of a constant $C$ only depending on $u^0$ and $\Omega$ 
such that $\sum_{k=1}^{j}\Delta t\|u_i^k\|_{0,2,\T}^2\le C$.

For the estimate of the discrete time derivative, 
we choose $\phi\in V_\T$ with $\|\phi\|_{1,2,\T}=1$
and multiply scheme \eqref{2.fvm} by $\phi_K$, sum over $K\in\T$, $k=1,\ldots,N_T$, 
and apply discrete integration by parts:
\begin{align*}
  \sum_{K\in\T} \m(K)\frac{u^k_{i,K}-u^{k-1}_{i,K}}{\Delta t} \phi_K 
	&= -\sum_{j=1}^n \sum_{\substack{\sigma\in\E_{{\rm int}} \\ \sigma=K|L}} 
	\tau_\sigma A_{ij}(u^k_\sigma) \text{D}_{K,\sigma} u^k_j\text{D}_{K,\sigma} \phi 
	=: J_1.
\end{align*}
The boundedness of $u_{i,\sigma}^k$ and the Cauchy-Schwarz inequality imply that
\begin{align*}
  |J_1| &\le \sum_{j=1}^n \max_\Omega|A_{ij}(u_\sigma^k)|
	|u^k_j|_{1,2,\T}|\phi|_{1,2,\T}
	\le C\sum_{j=1}^n \|u^k_j\|_{1,2,\T}\|\phi\|_{1,2,\T}.
\end{align*}
We infer that
$$
  \sum_{k=1}^{N_T}\Delta t\bigg\|\frac{u_i^k-u_i^{k-1}}{\Delta t}\bigg\|_{-1,2,\T}^2
	= \sup_{\|\phi\|_{1,2,\T}=1}\sum_{k=1}^{N_T}\Delta t\bigg|\sum_{K\in\T}\m(K)
	\frac{u_i^k-u_i^{k-1}}{\Delta t}\phi_K\bigg|^2 \le C. 
$$
This finishes the proof.
\end{proof}


\subsection{Compactness properties}

Let $(\mathcal{D}_m)_{m\in\N}$ be a sequence of admissible meshes of $\Omega_T$
satisfying the mesh regularity \eqref{2.dd} uniformly in $m\in\N$.
We claim that the estimates from Lemma \ref{lem.est}
imply the strong convergence of a subsequence of $(u_{i,m})$.

\begin{proposition}[Strong convergence]\label{prop.conv}\sloppy
Let the assumptions of Theorem \ref{thm.conv} hold and let $(u_m)_{m\in\N}$ be
a sequence of discrete solutions to \eqref{2.init}--\eqref{2.usigma} constructed
in Theorem \ref{thm.ex}. Then there exists a subsequence of $(u_m)$, which is not
relabeled, and $u=(u_1,\ldots,u_n)\in L^\infty(\Omega_T)$ such that for any
$i=1,\ldots,n$,
$$
  u_{i,m}\to u_i\quad\mbox{strongly in }L^p(\Omega_T)\mbox{ as }m\to\infty,\quad
	1 \leq p<\infty.
$$
\end{proposition}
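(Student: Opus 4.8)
The plan is to establish strong $L^p$ convergence by combining the uniform bounds from Lemma \ref{lem.est} with a discrete Aubin--Lions-type compactness argument. The two crucial ingredients from that lemma are a uniform bound on $\sum_k \Delta t\,\|u_i^k\|_{1,2,\T}^2$ (a discrete $L^2(0,T;H^1)$ bound) and a uniform bound on $\sum_k \Delta t\,\|\pa_t^{\Delta t} u_i^k\|_{-1,2,\T}^2$ (a discrete $L^2(0,T;H^{-1})$ bound on the discrete time derivative). First I would invoke the discrete compactness theorem of \cite{GaLa12} (the reference the authors announce for exactly this purpose), which takes precisely these two types of estimates, together with the uniform mesh-regularity assumption \eqref{2.dd}, and yields relative compactness of the family $(u_{i,m})$ in $L^2(\Omega_T)$. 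This gives a subsequence $u_{i,m}\to u_i$ strongly in $L^2(\Omega_T)$ and, after a further extraction, almost everywhere in $\Omega_T$.

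Next I would upgrade the $L^2$ convergence to $L^p$ for every finite $p$. The key observation is the uniform $L^\infty$ bound: since $u_K^k\in\overline{D}$ by Theorem \ref{thm.ex}, each discrete solution satisfies $0\le u_{i,m}\le 1$ pointwise, hence $\|u_{i,m}\|_{L^\infty(\Omega_T)}\le 1$ uniformly in $m$. The almost-everywhere limit therefore also satisfies $0\le u_i\le 1$, so $u\in L^\infty(\Omega_T)$. Given a.e.\ convergence together with a uniform $L^\infty$ bound, the dominated convergence theorem immediately promotes the convergence to $L^p(\Omega_T)$ for every $1\le p<\infty$: indeed $|u_{i,m}-u_i|^p\le 2^p$ is dominated by a constant on the bounded domain $\Omega_T$, and $|u_{i,m}-u_i|^p\to 0$ a.e. This is the cleanest route and avoids any interpolation inequality.

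The main obstacle is verifying that the hypotheses of the compactness result in \cite{GaLa12} are genuinely met by our discrete functions, in particular that the discrete $H^{-1}$-type dual norm used there matches $\|\cdot\|_{-1,2,\T}$ as defined in the excerpt, and that the uniform mesh regularity \eqref{2.dd} supplies the geometric control (translation estimates, equivalence of discrete and continuous norms) that the abstract theorem requires. One must also confirm that the piecewise-constant-in-time reconstruction $u_{i,m}\in V_{\T,\Delta t}$ is the correct object to feed into the theorem and that the time-derivative bound controls the time translates in the required dual norm. Once these compatibility checks are in place, the extraction of the a.e.-convergent subsequence and the passage to the $L^\infty$ limit are routine; the remaining work of identifying $u$ as a weak solution of \eqref{1.eq}--\eqref{1.bic} is deferred to the proof of Theorem \ref{thm.conv} and relies separately on the weak convergence $\na^m u_{i,m}\rightharpoonup\na u_i$ together with the strong convergence just obtained.
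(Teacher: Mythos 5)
Your proposal follows essentially the same route as the paper: the uniform discrete $L^2(0,T;H^1)$ and dual-norm time-derivative bounds from Lemma \ref{lem.est} are fed into the discrete Aubin--Lions lemma of \cite{GaLa12} to obtain strong $L^2(\Omega_T)$ convergence of a subsequence, and the uniform $L^\infty$ bound ($u_K^k\in\overline{D}$, so $0\le u_{i,m}\le 1$) then upgrades this to $L^p(\Omega_T)$ for all finite $p$. The ``compatibility checks'' you flag are exactly what the paper verifies, namely compactness of discrete-$H^1$-bounded families in $L^2(\Omega)$ via \cite[Lemma 5.6]{EGH08} and the duality of $\|\cdot\|_{1,2,\T_m}$ and $\|\cdot\|_{-1,2,\T_m}$ with respect to the $L^2$ inner product via \cite[Remark 6]{GaLa12}.
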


\begin{proof}
The result follows from the discrete Aubin--Lions lemma of \cite[Theorem 3.4]{GaLa12}
if the following two properties are satisfied:
\begin{itemize}
\item[(i)] For any sequence $(v_m)_{m \in \N} \subset V_{\T_m}$ such that 
there exists $C>0$ with $\|v_m\|_{1,2,\T_m} \le C$ for all $m \in \N$, 
there exists $v \in L^2(\Omega)$ satisfying, up to a subsequence, $v_m \to v$ 
strongly in $L^2(\Omega)$.
\item[(ii)] If $v_m \to v$ strongly in $L^2(\Omega)$ and $\|v_m\|_{-1,2,\T_m} \to 0$ 
as $m \to \infty$, then $v=0$.
\end{itemize}
Property (i) follows from \cite[Lemma 5.6]{EGH08}, while property (ii) can be 
replaced by the condition that $\|\cdot\|_{1,2,\T_m}$ and $\|\cdot\|_{-1,2,\T_m}$ 
are dual norms with respect to the $L^2(\Omega)$ norm, thanks to 
\cite[Remark 6]{GaLa12}, which is the case here. 
Then \cite[Theorem 3.4]{GaLa12} implies that there exists
a subsequence (not relabeled) such that $u_{i,m}\to u_i$ strongly in 
$L^2(0,T;L^2(\Omega))$ as $m\to\infty$. 
We deduce from the $L^\infty$ bound for $(u_{i,m})$ that
$u_{i,m}\to u_i$ strongly in $L^p(\Omega_T)$ for any $1 \leq p<\infty$.
\end{proof}

\begin{lemma}[Convergence of the gradient]
Under the assumptions of Proposition \ref{prop.conv}, there exists a subsequence
of $(u_m)_{m\in\N}$ such that for $i=1,\ldots,n$,
$$
  \na^m u_{i,m}\rightharpoonup\na u_i\quad\mbox{weakly in }L^2(\Omega_T)
	\mbox{ as }m\to\infty,
$$
where $\na^m$ is defined in Section \ref{sec.main}.
\end{lemma}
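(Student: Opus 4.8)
The plan is to show weak $L^2$ convergence of the discrete gradients $\nabla^m u_{i,m}$ by combining a uniform $L^2(\Omega_T)$ bound on these gradients (which yields weak convergence to \emph{some} limit along a subsequence) with an identification argument showing that this limit must coincide with the distributional gradient $\nabla u_i$ of the strong $L^p$ limit $u_i$ obtained in Proposition~\ref{prop.conv}. First I would establish the uniform bound: the definition of $\nabla^m u_{i,m}$ together with the relation $\m(\sigma)\dist(x_K,x_L)=2\m(T_{K,\sigma})$ gives, on each diamond cell,
$$
  |\nabla^m u_{i,m}|^2 = \frac{\m(\sigma)^2}{\m(T_{K,\sigma})^2}(\mathrm{D}_\sigma u_i^k)^2,
$$
so that
$$
  \int_{\Omega_T}|\nabla^m u_{i,m}|^2\,dx\,dt
  = \sum_{k=1}^{N_T}\Delta t\sum_{\sigma\in\E}2\tau_\sigma(\mathrm{D}_\sigma u_i^k)^2
  = 2\sum_{k=1}^{N_T}\Delta t\,|u_i^k|_{1,2,\T_m}^2 \le C,
$$
where the final bound is exactly the seminorm estimate from Lemma~\ref{lem.est}. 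Hence $(\nabla^m u_{i,m})$ is bounded in $L^2(\Omega_T;\R^2)$ and, up to a further subsequence, converges weakly to some limit $G_i\in L^2(\Omega_T;\R^2)$.

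The heart of the argument is then identifying $G_i=\nabla u_i$. The natural route is to test the discrete gradient against a smooth vector field $\varphi\in C_0^\infty(\Omega_T;\R^2)$ and pass to the limit. One writes
$$
  \int_{\Omega_T}\nabla^m u_{i,m}\cdot\varphi\,dx\,dt
$$
and uses the definition of $\nabla^m$ on each $T_{K,\sigma}$ together with a discrete integration-by-parts (summation by parts) to transfer the difference operator $\mathrm{D}_{K,\sigma}u_i^k$ onto the test function. This produces a sum of the form $\sum_{\sigma}\m(\sigma)\,u_{i,K}^k\,(\text{averaged normal flux of }\varphi)$, which is a consistent finite-volume approximation of $-\int_{\Omega_T}u_{i,m}\,\diver\varphi\,dx\,dt$. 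The standard consistency estimate, relying on the orthogonality of $\overline{x_Kx_L}$ to $\sigma$ and the mesh regularity \eqref{2.dd}, controls the difference between the discrete and continuous integrals by a term of order $\eta_m\to 0$. Passing to the limit and using the strong convergence $u_{i,m}\to u_i$ from Proposition~\ref{prop.conv}, one obtains on the one hand $\int_{\Omega_T}G_i\cdot\varphi$ (from weak convergence of the gradients) and on the other hand $-\int_{\Omega_T}u_i\,\diver\varphi$ (from the right-hand side). Since these hold for all $\varphi\in C_0^\infty(\Omega_T;\R^2)$, the definition of the distributional gradient yields $G_i=\nabla u_i$, which in particular shows $u_i\in L^2(0,T;H^1(\Omega))$.

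I expect the main obstacle to be the consistency estimate in the identification step, namely controlling the error between the discrete bilinear pairing and the continuous integral $-\int u_{i,m}\diver\varphi$. The delicate point is that the approximate gradient is reconstructed on the dual diamond mesh $T_{K,\sigma}$ while $u_{i,m}$ lives on the primal mesh, so one must carefully match the geometric weights $\m(\sigma)/\m(T_{K,\sigma})$ with the averages of $\varphi$ over the cells, and use the orthogonality property of the admissible mesh to recover the correct normal derivative structure with an $O(\eta_m)$ remainder. This is where the mesh regularity \eqref{2.dd} (uniform in $m$) is essential: it prevents the diamond cells from degenerating and keeps the transmissibility weights comparable, so that the Cauchy--Schwarz bound on the remainder, combined with the uniform gradient estimate, goes to zero. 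Rather than reprove this from scratch, I would invoke the now-standard consistency results for two-point flux approximations on admissible meshes (e.g.\ the techniques in \cite{EGH00}), which apply directly in the present setting.
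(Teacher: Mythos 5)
Your proposal is correct and follows essentially the same route as the paper: the uniform $L^2(\Omega_T)$ bound on $\na^m u_{i,m}$ (your computation $\|\na^m u_{i,m}\|^2_{L^2(\Omega_T)} = 2\sum_k \Delta t\,|u_i^k|^2_{1,2,\T_m}$, controlled by Lemma \ref{lem.est}), weak compactness along a subsequence, and identification of the weak limit with the distributional gradient $\na u_i$. The only difference is that the paper delegates the identification step entirely to \cite[Lemma 4.4]{CLP03}, whereas you sketch the proof of that cited lemma (duality against $\varphi\in C_0^\infty(\Omega_T;\R^2)$, summation by parts, and the $O(\eta_m)$ consistency estimate), which is exactly its content.
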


\begin{proof}
Lemma \ref{lem.est} implies that $(\na^m u_{i,m})$ is bounded in $L^2(\Omega_T)$.
Thus, for a subsequence, $\na^m u_{i,m}\rightharpoonup v_i$ weakly in $\Omega_T$
as $m\to\infty$. It is shown in \cite[Lemma 4.4]{CLP03} that $v_i=\na u_i$.
\end{proof}


\subsection{Convergence of the scheme}

We show that the limit $u$ from Proposition \ref{prop.conv} is a weak solution
to \eqref{1.eq}--\eqref{1.bic}. Let $i\in\{1,\ldots,n\}$ be fixed, let
$\psi_i\in C_0^\infty(\Omega\times[0,T))$ be given, and let
$\eta_m:=\max\{\Delta x_,\Delta t_m\}$ be sufficiently small such that
$\operatorname{supp}(\psi_i)\subset\{x\in\Omega:\dist(x,\pa\Omega)>\eta_m\}
\times[0,T)$. Furthermore, let $\psi_{i,K}^k:=\psi_i(x_K,t_k)$. We multiply scheme
\eqref{2.fvm} by $\Delta t_m\psi_{i,K}^{k-1}$ and sum over $K\in\T_m$ and
$k=1,\ldots,N_T$. Then $F_1^m+F_2^m=0$, where
\begin{align*}
	& F_1^m = \sum_{k=1}^{N_T}\sum_{K\in\T_m}\m(K)\big(u_{i,K}^k-u_{i,K}^{k-1}\big)
	\psi_{i,K}^{k-1}, \\
	& F_2^m = -\sum_{j=1}^n\sum_{k=1}^{N_T}\Delta t_m\sum_{K\in\T_m}
	\sum_{\sigma\in\Eint}\tau_\sigma 
	A_{\sigma,ij}(u^k_\sigma) \text{D}_{K,\sigma} u^k_j \psi_{i,K}^{k-1}.
\end{align*}
Furthermore, we introduce
\begin{align*}
	F_{10}^m &= -\int_0^T\int_{\Omega}u_{i,m} \pa_t\psi_i dxdt
	- \int_{\Omega}u_{i,m}(x,0)\psi_i(x,0)dx, \\
	F_{20}^m &= \sum_{j=1}^n \int_0^T\int_{\Omega} A_{ij}(u_{m}) \na^m u_{j,m}\cdot 
	\na\psi_i dxdt.
\end{align*}
It follows from the convergence results from the previous subsection, the continuity 
of $A_{ij}$, and the assumption on the initial data that, as $m\to\infty$,
\begin{align*}
  F_{10}^m+F_{20}^m
	&\to -\int_0^T\int_\Omega u_i\pa_t\psi_i dxdt
	- \int_\Omega u_i^0(x) \psi_i(x,0)dx 
	+ \sum_{j=1}^n \int_0^T\int_\Omega A_{ij}(u) \na u_j \cdot\na \psi_i dxdt. 
\end{align*}
We prove that $F_{j0}^m-F_j^m\to 0$ as $m\to\infty$ for $j=1,2$, since this
shows that $F_{10}^m+F_{20}^m\to 0$, finishing the proof.

We start with the first difference $F_{10}^m-F_1^m$. It is shown in 
\cite[Theorem 5.2]{CLP03}, using the $L^\infty(\Omega_T)$ bound
for $u_{i,m}$ and the regularity of $\phi$, that $F_{10}^m-F_1^m\to 0$.
It remains to verify that $|F_{20}^m-F_2^m|\to 0$. To this end, we apply
discrete integration by parts and write $F_2^m=F_{21}^m+F_{22}^m$, where
\begin{align*}
  F_{21}^m &= \sum_{j=1}^n \sum_{k=1}^{N_T}\Delta t_m\sum_{K\in\T_m}
	\sum_{\sigma\in\Eint} 
	\tau_\sigma A_{ij}(u^k_K)\text{D}_{K,\sigma} u^k_{j}\text{D}_{K,\sigma} 
	\psi_{i}^{k-1}, \\
  F_{22}^m &= \sum_{j=1}^n\sum_{k=1}^{N_T}\Delta t_m\sum_{K\in\T_m}
	\sum_{\sigma\in\Eint}\tau_\sigma \big(A_{\sigma,ij}(u^k_\sigma)-A_{\sigma,ij}
	(u^k_{0,K},u^k_K)\big) \text{D}_{K,\sigma} u_j^k\text{D}_{K,\sigma} \psi_{i}^{k-1}.
\end{align*}
Here, we used the equality $A_\sigma(u^k_{0,K},u^k_K)=A(u^k_K)$ since $u^k_K \in D$ 
and $u^k_{0,K}=1-\sum_{i=1}^n u^k_{i,K}$ for all $K \in \T$, coming from 
Hypothesis (H5). The definition of the discrete gradient $\na^m$ in 
Section \ref{ssec.main} gives
\begin{align*}
  |F_{20}^m - F_{21}^m| 
	&\le \sum_{j=1}^n \sum_{k=1}^{N_T} \sum_{K\in\T_m} \sum_{\sigma\in\Eint} \m(\sigma) 
	|A_{ij}(u^k_K)| |\text{D}_{K,\sigma} u^k_j| \\ 
	&\phantom{xx}{}\times \bigg|\int_{t_{k-1}}^{t_k}\bigg(\frac{\text{D}_{K,\sigma} 
	\psi_i^{k-1}}{\dist_{\sigma}} - \frac{1}{\m(T_{K,\sigma})} \int_{T_{K,\sigma}} 
	\na\psi_i \cdot \nu_{K,\sigma} dx \bigg) dt \bigg|.
\end{align*}
It is shown in the proof of \cite[Theorem 5.1]{CLP03} that there exists a constant 
$C_0>0$ such that
$$
  \bigg|\int_{t_{k-1}}^{t_k}\bigg(\frac{\text{D}_{K,\sigma} 
	\psi_i^{k-1}}{\dist_{\sigma}} - \frac{1}{\m(T_{K,\sigma})} \int_{T_{K,\sigma}} 
	\na\psi_i \cdot \nu_{K,\sigma} dx \bigg) dt \bigg|
	\le C_0\Delta t_m\eta_m.
$$
Hence, by the uniform $L^\infty$ bound for $u^k$ and the Cauchy--Schwarz inequality,
\begin{align*}
  |F_{20}^m-F_{21}^m| &\le C_0\eta_m \sum_{j=1}^n \sum_{k=1}^{N_T} \Delta t_m 
	\sum_{K\in\T_m} \sum_{\sigma\in\Eint}\m(\sigma)|A_{ij}(u^k_K)| \, 
	|\text{D}_{K,\sigma} u^k_j | \\
  &\le C\eta_m\sum_{j=1}^n \sum_{k=1}^{N_T} \Delta t_m \|u_j^k\|_{1,2,\T_m} 
	\bigg(\sum_{K\in\T_m}\sum_{\sigma\in\Eint}\m(\sigma)\dist_\sigma 
	\bigg)^{1/2}.
\end{align*}
We deduce from the mesh regularity \eqref{2.dd} and the assumption 
$\Omega \subset \R^2$ that
$$
  \sum_{K\in\T_m}\sum_{\sigma\in\Eint}\m(\sigma)\dist_\sigma
	\le \frac{1}{\zeta}\sum_{K\in\T_m}\sum_{\sigma\in\Eint}\m(\sigma)\dist(x_K,\sigma)
	\le \frac{2}{\zeta}\sum_{K\in\T_m}\m(K) = \frac{2}{\zeta}\m(\Omega).
$$
Hence, we have
$$
  |F_{20}^m-F_{21}^m| \le C\eta_m\sum_{j=1}^n \sum_{k=1}^{N_T} 
	\Delta t_m \|u_j^k\|_{1,2,\T_m} \le C\eta_m\to 0.
$$

For the estimate of $F_{22}^m$, we need the Lipschitz continuity of
$A_{\sigma,ij}$, the upper bound $u_{\ell,\sigma}^k\le (u_{\ell,K}^k+u_{\ell,L}^k)/2$ 
for $\sigma\in\E_{\rm int}$ and $\ell=0,\ldots,n$, the equality 
$u^k_{0,K}=1-\sum_{\ell=1}^n u^k_{\ell,K}$ for all $K \in \T$, and the 
Cauchy--Schwarz inequality:
\begin{align*}
  |F_{22}^m|&\le C\eta_m\|\psi_i\|_{C^1(\overline{\Omega}_T)}G_m, \quad\mbox{where} \\
  G_m &= \sum_{j=1}^n \sum_{k=1}^{N_T}\Delta t_m \sum_{K\in\T_m} 
	\sum_{\sigma\in\Eint}\tau_\sigma |A_{\sigma,ij}(u^k_\sigma)
	-A_{\sigma,ij}(u_{0,K}^k,u^k_K)|\text{D}_{\sigma} u_j^k \\
	&\le C \sum_{j=1}^n \sum_{k=1}^{N_T}\Delta t_m \sum_{K\in\T_m} 
	\sum_{\sigma\in\Eint}\tau_\sigma \bigg(\sum_{\ell=0}^n 
	|u_{\ell,\sigma}^k-u_{\ell,K}^k| \bigg)\text{D}_{\sigma} u_j^k \\
	&\le \frac{C}{2}\sum_{j=1}^n \sum_{k=1}^{N_T}\Delta t_m \sum_{K\in\T_m} 
	\sum_{\sigma\in\Eint}\tau_\sigma \bigg(\sum_{\ell=0}^n 
	|u_{\ell,L}^k-u_{\ell,K}^k| \bigg)\text{D}_{\sigma}u_j^k \\
	&\le C \sum_{\ell,j=1}^n \sum_{k=1}^{N_T}\Delta t_m \sum_{K\in\T_m} 
	\sum_{\sigma\in\Eint}\tau_\sigma \text{D}_{\sigma} u_{\ell}^k
	\text{D}_{\sigma} u_j^k \\
	&\le C\bigg(\sum_{\ell=1}^n\sum_{k=1}^{N_T}\Delta t_m\sum_{\sigma\in\E}\tau_\sigma
	(\text{D}_\sigma u_\ell^k)^2\bigg)^{1/2}\bigg(\sum_{j=1}^n\sum_{k=1}^{N_T}
	\Delta t_m\sum_{\sigma\in\E}\tau_\sigma(\text{D}_\sigma u_j^k)^2\bigg)^{1/2}.
\end{align*}
By Lemma \ref{lem.est}, the right-hand side is bounded uniformly in $m$.
Thus, $|F_{22}^m|\le C\eta_m\to 0$ and $|F_{20}^m-F_2^m|\le |F_{20}^m-F_{21}^m|
+ |F_{22}^m|\to 0$ as $m\to\infty$. This finishes the proof.

\begin{remark}\label{rem.conv}\rm
Let us mention some possible extensions of Theorem \ref{thm.conv}.
We can easily adapt the proof to the case $s=1$ if we assume in addition to
(H5) that $\|A_\sigma(0)\|<\infty$ holds. Moreover, we can include source terms
$f_i\in C^0(\overline{D})$ for $i=1,\ldots,n$ such that the conditions 
\eqref{3.source} and $\Delta t_m < 1/C_f$ for $m \in \N$ are fulfilled. 
Then, following the proof in \cite[Theorem 2]{JuZu20}, we can show that 
Theorem \ref{thm.conv} still holds.
\qed
\end{remark}


\section{Numerical examples}\label{sec.num}

We present in this section some numerical experiments in one and two space dimensions. 

\subsection{Implementation of the scheme}

The finite-volume scheme \eqref{2.init}--\eqref{2.usigma} is implemented in MATLAB. 
Since the numerical scheme is implicit in time, we have to solve a nonlinear system 
of equations at each time step.
In the one-dimensional case, we use Newton's method. Starting from 
$u^{k-1}=(u^{k-1}_1, u^{k-1}_2)$, we apply a Newton method with precision 
$\eps = 10^{-10}$ to approximate the solution to the scheme at time step $k$.
In the two-dimensional case, we use a Newton method complemented by an adaptive 
time-stepping strategy to approximate the solution of the scheme at time $t_k$. 
More precisely, starting again from $u^{k-1}=(u^{k-1}_1, u^{k-1}_2)$, we launch a 
Newton method. If the method does not converge with precision 
$\eps= 10^{-10}$ after at most $50$ steps, we multiply the time step by a factor $0.2$ and restart the 
Newton method. At the beginning of each time step, we increase the value of the previous time 
step size by multiplying it by $1.1$. Moreover, we impose the condition 
$10^{-8}\leq \Delta t_{k} \leq 10^{-2}$ with 
an initial time step size set to $ 10^{-5}$.

\subsection{Test case 1: Rate of convergence in space}\label{ssec.convergence}

In this section, we illustrate the order of convergence in space for the 
Maxwell--Stefan model presented in Section \ref{ssec.MaxwellStefan} in one 
space dimension with $\Omega = (0,1)$. We consider a similar test case as in 
\cite[Section 6.2]{JuLe19} but with a discontinuous initial datum $u_1^0$. 
We choose the coefficients $d_0=1/0.168$, $d_1=1/0.68$, and $d_2= 1/ 0.883$ and impose 
the initial datum
\begin{align*}
  & u_1^0(x) = 0.8 \cdot\mathbf{1}_{(0,0.5)}(x),\quad u_2^0(x) = 0.2. 
\end{align*}
Since exact solutions to the Maxwell-Stefan model are not explicitly known, we compute 
a reference solution on a uniform mesh composed of $5120$ cells and with time step 
size $\Delta t = (1/5120)^2$. We use this rather small value of $\Delta t$,
because the Euler discretization in time exhibits a first-order convergence rate, 
while we expect, as observed for instance in \cite{CaGa20}, a second-order convergence 
rate in space for scheme \eqref{2.init}--\eqref{2.usigma}, due to the logarithmic 
mean used to approximate the mobility coefficients in the numerical fluxes. 
We compute approximate solutions on uniform meshes made of $40$, $80$, $160$, 
$320$, $640$, and $1280$ cells, respectively. In Figure \ref{Fig.conv}, we present
the sum of the $L^1(\Omega)$ norms of the differences between the 
approximate solution $u_i$ and
the average of the reference solution $u_{i, \rm ref}$ at the final time $T=10^{-2}$. 
As expected, we observe a second-order convergence rate in space as in \cite{JuLe19}.

\begin{figure}
\begin{center}
\includegraphics[scale=1.1]{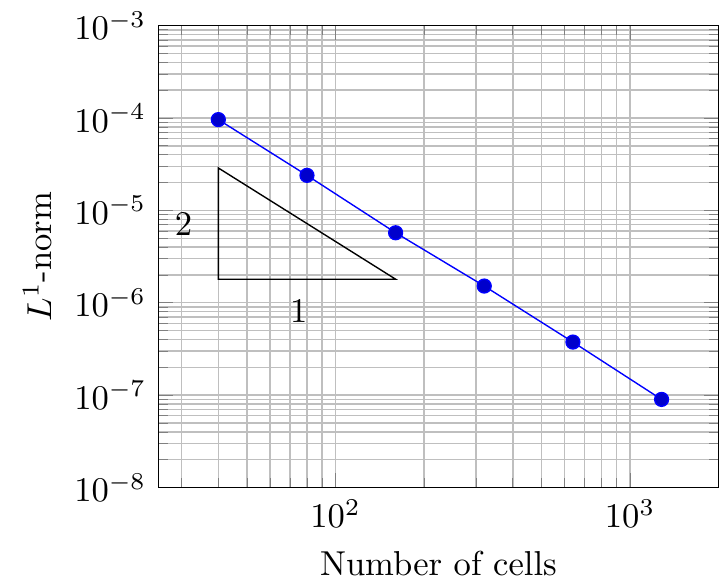}
\end{center}
\caption{Test case 1: $L^1$ norm of the error in space at final time $T=10^{-2}$.}
\label{Fig.conv}
\end{figure}

\subsection{Test case 2: Long-time behavior}\label{ssec.longtime}

Following \cite[Section 5.3]{CaGa20}, we study the long-time behavior of the scheme for the cross-diffusion system for thin-film solar cells, introduced in 
Section \ref{sec.exam.thinfilm}, with reactions terms. More precisely, 
for $\Omega = (0,1)^2$ and the final time $T=15$, we consider the system
\begin{align}\label{4.thinfilm1}
  \pa_t u_1 - \diver\big(A_{11}(u)\na u_1 + A_{12}(u)\na u_2\big) 
  &= r_1(u),\\
  \pa_t u_2 - \diver\big(A_{21}(u)\na u_1 + A_{22}(u)\na u_2\big) 
  &= -2 r_1(u),\label{4.thinfilm2}
\end{align} 
where
$$
  r_1(u) = (u_2^+)^2 - 1000 u^+_1 (1-u_1-u_2)^+,
$$
and the coefficients of the diffusion matrix $A(u)$ are given by \eqref{2.diff.matrix.thinfilm}. We choose, similar to \cite{CaGa20}, $a_{10} =  1$, 
$a_{20} = 0.1$, $a_{12} = 0$, and $a_{21} = 0$. Observe that these coefficients 
do not satisfy the assumptions of Lemma \ref{lem.thinfilm}. Finally, we impose 
the initial datum
$$
  u_1^0(x,y) = \frac{9}{44} \frac{\mathbf{1}_{(0,0.5)^2}(x,y)}{0.5^2},\quad
  u_2^0(x,y) = \frac{2}{11} \frac{\mathbf{1}_{(0.5,1)^2}(x,y)}{0.5^2}.
$$
The steady state of this system is given by
\begin{align*}
u^\infty_1 = \frac{9}{44} - \alpha, \quad u^\infty_2 = \frac{2}{11}+2\alpha,
\end{align*}
where $\alpha = (-5 \sqrt{206530}+4504)/10956$ is the unique root of the polynomial of 
degree two given by $r_1(u^\infty)$ which ensures the nonnegative of the steady state $u^\infty=(u^\infty_1,u^\infty_2)$, see \cite[Section 5.3]{CaGa20} for more details. 

Let us notice that the source terms in \eqref{4.thinfilm1}--\eqref{4.thinfilm2} do not satisfy the assumptions \eqref{3.source} of Remark \ref{rem.source}. Indeed, in this case, we consider the (discrete) relative Boltzmann entropy
\begin{align}\label{4.entropy.rel}
  \mathcal{H}[u|u^\infty] = \sum_{i=0}^2 \sum_{K \in \T} \m(K) \bigg( u_{i,K} \log\frac{u_{i,K}}{u^\infty_i} + u^\infty_i - u_{i,K} \bigg),
\end{align}
with $u^\infty_0=1-u^\infty_1-u^\infty_2$. Now, since by construction of $u^\infty$ we have $\log(u^\infty_1 u^\infty_0) - 2\log(u^\infty_2) = \log(1000)$, we conclude that
\begin{align*}
r_1(u) (h'_1(u_1)+h'_0(u_0)) -2r_1(u) (h'_2(u_2)+h'_0(u_0))= r_1(u) \left( \log(1000 u_1 u_0) - \log(u_2^2) \right) \leq 0.
\end{align*}
In particular, under the assumptions of Lemma \ref{lem.thinfilm} and adapting the proof of Theorem \ref{thm.ex}, the following discrete entropy inequality holds:
\begin{equation*}
  \mathcal{H}[u^k|u^\infty] + \alpha \Delta t\sum_{i=1}^n\sum_{\sigma\in\E_{\rm{int}}}\tau_\sigma 
	\big(\mathrm{D}_\sigma (u_i^k)^{1/2}\big)^2	\le \mathcal{H}[u^{k-1}|u^\infty],
\end{equation*}
with $\alpha = \min_{i,j=1,\ldots,2} a_{ij}$. Then, arguing as in \cite{CCHK20}, there exist constants $\kappa>0$ 
(depending on $u^0$) and $\lambda>0$ (depending on $\alpha$, $u^0$, and $\zeta$) 
such that
$$
  \sum_{i=1}^n \|u_i^k-u^\infty_i\|^2_{0,1,\T} 
  \le \kappa \mathcal{H}[u^0|u^\infty]e^{-\lambda t_k}
  \quad\mbox{for all }k\ge 1.
$$

Figure \ref{Fig.longtime} illustrates, in semilogarithmic scale and for a mesh of 
$\Omega$ made of
3584 triangles (see \cite[Fig. 2 left]{JuZu20}), the temporal evolution of the 
discrete relative entropy \eqref{4.entropy.rel}. As in \cite{CaGa20}, we observe an 
exponential decay towards the steady state, 
although the assumptions of Lemma \ref{lem.thinfilm} are not fulfilled.

\begin{figure}
\begin{center}
\includegraphics[scale=1.1]{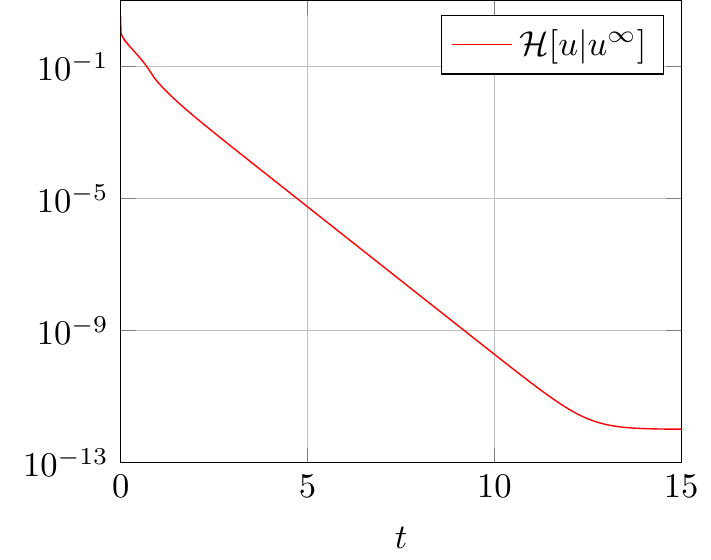}
\end{center}
\caption{Test case 2: Discrete relative entropy versus time.}
\label{Fig.longtime}
\end{figure}


\end{document}